\DeclareOldFontCommand{\rm}{\normalfont\rmfamily}{\mathrm}
\DeclareOldFontCommand{\sf}{\normalfont\sffamily}{\mathsf}
\DeclareOldFontCommand{\tt}{\normalfont\ttfamily}{\mathtt}
\DeclareOldFontCommand{\bf}{\normalfont\bfseries}{\mathbf}
\DeclareOldFontCommand{\it}{\normalfont\itshape}{\mathit}
\DeclareOldFontCommand{\sl}{\normalfont\slshape}{\@nomath\sl}
\DeclareOldFontCommand{\sc}{\normalfont\scshape}{\@nomath\sc}
\numberwithin{equation}{section} 
\numberwithin{figure}{section} 
\numberwithin{table}{section} 
\newcommand{\Gs}{\overline{G}_{\sigma}}
\newcommand{\Ts}{\overline{T}_{\sigma}}
\newcommand{\Go}{\overline{G}}
\newcommand{\To}{\overline{T}}
\newcommand{\No}{\overline{N}}
\newcommand{\Ss}{\overline{S}_{\sigma}}
\newcommand{\So}{\overline{S}}
\newcommand{\Bo}{\overline{B}}
\newcommand{\Uo}{\overline{U}}
\newcommand{\diag}{\mathrm{diag}}
\newcommand{\Hs}{\overline{H}_{\sigma}}
\newcommand{\Ho}{\overline{H}}
\newcommand{\Hss}{\overline{H}_{\sigma_1}}
\def\Sym{{\rm Sym}}
\title{	
On algebraic normalisers of maximal tori in simple groups of Lie type \\ 
}
\author{Anton A. Baykalov} 
\address{Department of Mathematics, University of Auckland, Auckland, New Zealand} 
\email{a.a.baykalov@gmail.com}
\keywords{finite group, simple group, maximal tori, algebraic normaliser}
\subjclass{	20D06, 20G07}
\newtheorem{Th}{Theorem}
\newtheorem{Lem}[Th]{Lemma}
\newtheorem{Cor}[Th]{Corollary}
\theoremstyle{definition}
\newtheorem{Def}[Th]{Definition}
\newtheorem{Rem}[Th]{Remark}
\newtheorem{Not}[Th]{Notation}
\numberwithin{Th}{section}
\begin{document}

\begin{abstract}
Let $G$ be a finite simple group of Lie type and let $T$ be a maximal torus of $G$. It is well known that if the defining field of $G$ is large enough, then the normaliser of $T$ in $G$ is equal to the algebraic normaliser $N(G,T)$. We identify explicitly all the cases when $N_G(T)$ is not equal to $N(G,T).$
\end{abstract}

\maketitle 


\section{Introduction}

Consider    a simple (affine) algebraic group $\overline{G}$  over
the algebraic closure  $k= \overline{F_p}$ of the field $F_p$ of prime order $p$.
Let $\sigma$  be a Frobenius map, so the group $\overline{G}_{\sigma}$ of $\sigma$-stable elements is finite. Let $O^{p'}(\overline{G}_{\sigma})$ be the group generated by all $p$-elements of $\overline{G}_{\sigma}$. Denote the commutator subgroup  $O^{p'}(\overline{G}_{\sigma})'$ by  $G$. Such a group $G$ is
 a {\bf finite group of Lie type}.
 Let $\overline{T}$  be a maximal
$\sigma$-stable (that is, $(\To)^{\sigma} =\To$) torus of $\overline{G}$ and $\overline{N}=N_{\overline{G}}(\overline{T})$, then $T:=\overline{T} \cap G$
is a maximal torus of $G$ and $N(G,T):=\overline{N}\cap
G$ is the {\bf algebraic normaliser} of $T$ in $G$.


  Steinberg and Springer  \cite{seminar} consider reductive algebraic groups
and show that if no root relative to  $\overline{T}$ vanishes on $T$, then $N(G,T)$ is equal to $N_G(T)$. They comment that it would be 
worthwhile to work out the exact exceptions. In this article, we consider the case of an adjoint simple algebraic group $\Go$ such that $G$ is a finite simple group of Lie type. 

It is easy to see that, if $\sigma$-stable maximal tori $\To_1$ and $\To_2$ are conjugate by an element in $\Go_{\sigma},$ then the equalities $N(G,T_1)=N_G(T_1)$ and $N(G,T_2)=N_G(T_2)$ hold or do not hold simultaneously. We parametrise $\Go_{\sigma}$-classes of $\sigma$-stable maximal tori by the classes of $\sigma$-conjugate elements of the Weyl group.

\begin{Def}
Fix a maximal $\sigma$-stable  torus $\overline{T}$ of $\overline{G}$ and denote by $W$ its Weyl group $\overline{N} / \overline{T}$. Let $\pi$ be the natural homomorphism from $\overline{N}$ to $W$. Then $\sigma$ acts on $W$ by $w^{\sigma}=\pi(n^{\sigma})$ where $w=\To n$, $n \in \No.$
Elements $w_1$ and $w_2$ of $W$ are  {\bf $\sigma$-conjugate} if $w_1=(w^{-1})^{\sigma}w_2w$ for some $w \in W$.
\end{Def}

Precisely, there is the following bijection. 

\begin{Lem}{\rm \cite[ Propositions 3.3.1 and 3.3.3]{carter0}}\label{sopr}
Let $g \in \Go$. A torus $\overline{T}^g$ is $\sigma$-stable if and only if $g^{\sigma}g^{-1} \in \overline{N}$. The map $\overline{T}^g \mapsto \pi(g^{\sigma}g^{-1})$ determines a bijection between $\Gs$-conjugacy classes of maximal  $\sigma$-stable tori and the set $H^{1}(\sigma, W)$ of classes of $\sigma$-conjugate elements of $W.$
\end{Lem}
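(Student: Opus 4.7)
The plan is to verify the ``iff'' characterisation directly, and then to split the bijection into three ingredients---well-definedness on conjugacy classes, surjectivity, and injectivity---with the last being the main obstacle. For the first part, the direct calculation $\sigma(\To^g) = g^{-\sigma}\sigma(\To)g^{\sigma} = \To^{g^{\sigma}}$ (using that $\To$ is $\sigma$-stable) shows that $\To^g$ is $\sigma$-stable iff $\To^g = \To^{g^{\sigma}}$, iff $g^{\sigma}g^{-1} \in N_{\Go}(\To) = \No$.

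Next, to verify well-definedness of the induced map $\Phi$ on $\Gs$-classes, I would take two $\sigma$-stable tori $\To^{g_1}$ and $\To^{g_2}$ conjugate by $y \in \Gs$, write $g_1 = n g_2 y^{-1}$ for some $n \in \No$, and compute $g_1^{\sigma}g_1^{-1} = n^{\sigma}(g_2^{\sigma}g_2^{-1})n^{-1}$ (using $y^{\sigma}=y$). Projecting via $\pi$ gives $\pi(g_1^{\sigma}g_1^{-1}) = \pi(n)^{\sigma}\pi(g_2^{\sigma}g_2^{-1})\pi(n)^{-1}$, which is exactly the $\sigma$-conjugacy relation with $w=\pi(n)^{-1}$. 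For surjectivity of $\Phi$, given $w \in W$ I would lift to $n \in \No$ and apply the Lang--Steinberg theorem to the connected group $\Go$ to produce $g \in \Go$ with $g^{\sigma}g^{-1}=n$; the torus $\To^g$ is $\sigma$-stable with $\Phi(\To^g)=w$, and every $\sigma$-stable maximal torus arises in this way because all maximal tori of $\Go$ are conjugate.

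The hard step is injectivity. Assuming $w_i := \pi(g_i^{\sigma}g_i^{-1})$ satisfy $w_1 = (w^{-1})^{\sigma}w_2 w$, the desired conjugation $\To^{g_1 y} = \To^{g_2}$ by $y \in \Gs$ is equivalent to exhibiting $z:=g_1 y g_2^{-1} \in \No$, and unwinding $y^{\sigma}=y$ reduces the task to finding $z \in \No$ with $z^{\sigma} = a_1 z a_2^{-1}$, where $a_i := g_i^{\sigma}g_i^{-1}$. Any lift $z_0 \in \No$ of $w^{-1}$ solves this equation modulo $\To$, so writing $z=s z_0$ the problem further reduces to solving $s\,\sigma_1(s)^{-1} = t$ in $\To$ for a prescribed $t \in \To$, where $\sigma_1(s):=a_1^{-1}s^{\sigma}a_1$ is the twisted Frobenius induced on $\To$ by the $\sigma$-structure of $\To^{g_1}$. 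A second application of Lang--Steinberg, now on the connected abelian group $\To$ with the endomorphism $\sigma_1$, yields the required $s$; the principal obstacle is precisely this step, together with the verification that $\sigma_1$ is genuinely a Frobenius (hence has finite fixed-point subgroup) so that Lang--Steinberg applies.
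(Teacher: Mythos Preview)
The paper does not supply its own proof of this lemma; it simply cites Carter \cite[Propositions~3.3.1 and~3.3.3]{carter0}. Your argument is correct and is essentially the standard one found there: the Lang--Steinberg theorem applied once to the connected group $\Go$ for surjectivity, and a second time to the connected torus $\To$ equipped with the twisted Frobenius $\sigma_1$ for injectivity, with the observation that $\sigma_1$ is conjugate (via $g_1$) to $\sigma$ on the $\sigma$-stable torus $\To^{g_1}$ and hence has finite fixed-point group.
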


Following \cite{carter0}, we say that $D \le \Ts$ is {\bf nondegenerate } if the conditions of Lemma \ref{car} are satisfied; otherwise $D \le \Ts$ is {\bf degenerate}. As mentioned above,  if $T$ is nondegenerate, then $N(G,T)=N_G(T)$.

\begin{Lem}\label{car} 
Let $D \le \Ts$. The following conditions are equivalent:
\begin{enumerate}
    \item $\To$ is the only maximal torus of $\Go$ containing $D$. 
    \item $\To = C_{\Go}(D)^0 $.
    \item No root $\alpha$ of $\Go$ with respect to $\To$ satisfies $\alpha(t) = 1$ for all $ t \in D$.
\end{enumerate}
\end{Lem}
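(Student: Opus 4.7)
The plan is to prove the cyclic implications $(iii) \Rightarrow (ii) \Rightarrow (i) \Rightarrow (iii)$, relying on standard structure theory of reductive algebraic groups. The principal tool is the description
\[
C_{\Go}(D)^0 = \langle \To, \, U_{\alpha} : \alpha \in \Phi,\ \alpha(t) = 1 \text{ for all } t \in D \rangle,
\]
where $\Phi$ is the root system of $\Go$ with respect to $\To$ and the $U_{\alpha}$ are the corresponding root subgroups. This is a classical fact, valid for any subset $D$ of a maximal torus since the elements of $D$ are semisimple, and it encodes the infinitesimal computation that $D$ acts on $\mathfrak{g}_{\alpha}$ via the character $\alpha$.

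Granted this description, the equivalence $(ii) \Leftrightarrow (iii)$ is immediate: $C_{\Go}(D)^0$ coincides with $\To$ precisely when no root subgroup enters the generating set, that is, when no $\alpha \in \Phi$ is trivial on $D$. For $(ii) \Rightarrow (i)$, any maximal torus $\To'$ of $\Go$ that contains $D$ is abelian and hence centralises $D$; connectedness then forces $\To' \subseteq C_{\Go}(D)^0 = \To$, and equality follows from maximality of $\To'$.

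For $(i) \Rightarrow (ii)$, set $C = C_{\Go}(D)^0$, which is a connected reductive subgroup of $\Go$ containing $\To$ as a maximal torus. Because $D$ is central in $C$, every maximal torus of $C$ contains $D$ and is moreover a maximal torus of $\Go$; by $(i)$ all such tori equal $\To$. Hence $C$ has a unique maximal torus, which forces the root system $\Phi(C,\To)$ to be empty, for a pair $\pm\alpha$ of roots would generate a subgroup of type $\mathrm{SL}_2$ or $\mathrm{PGL}_2$ producing further maximal tori. Therefore $C = \To$.

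The only real obstacle is the displayed generation formula itself, where essentially all of the substantive content lies; once it is invoked, the cyclic implications are routine bookkeeping. I would look up the formula in a standard reference (such as Carter's book, which is already cited for Lemma~\ref{sopr}) rather than reprove it from scratch.
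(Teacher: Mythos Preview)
Your argument is correct and is essentially the content of Carter's proof of \cite[Lemma~3.6.1]{carter0}, which the paper simply cites while noting that the case $D=\Ts$ there generalises verbatim to an arbitrary $D\le\Ts$. In particular, the generation formula for $C_{\Go}(D)^0$ you invoke is exactly the ingredient Carter uses, so there is no substantive difference in approach.
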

\begin{proof}
The proof for $D=\Ts$ in \cite[Lemma 3.6.1]{carter0} is valid for every $D \le \Ts$.
\end{proof}

\begin{Lem}\label{nongeneq}
If $T$ is nondegenerate, then $N(G,T)=N_G(T).$
\end{Lem}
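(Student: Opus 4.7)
The plan is to apply Lemma \ref{car} directly, exploiting the uniqueness of the maximal torus containing a nondegenerate subgroup.

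First I would dispatch the easy inclusion $N(G,T) \subseteq N_G(T)$. If $g \in N(G,T) = \overline{N} \cap G$, then $g \in G$ normalises $\overline{T}$, so
\[
T^g = (\overline{T} \cap G)^g = \overline{T}^g \cap G^g = \overline{T} \cap G = T,
\]
showing $g \in N_G(T)$. This step uses no nondegeneracy assumption.

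The substantive direction is $N_G(T) \subseteq N(G,T)$. Take $g \in N_G(T)$; the goal is to show $g \in \overline{N}$, i.e.\ that $g$ normalises the ambient algebraic torus $\overline{T}$. Conjugating the inclusion $T \le \overline{T}$ by $g$ gives $T = T^g \le \overline{T}^g$, and $\overline{T}^g$ is again a maximal torus of $\overline{G}$ because inner automorphisms of $\overline{G}$ send maximal tori to maximal tori. Thus $\overline{T}$ and $\overline{T}^g$ are both maximal tori of $\overline{G}$ containing the subgroup $T \le \overline{T}$. Invoking nondegeneracy of $T$ via condition $(i)$ of Lemma \ref{car} (applied with $D = T$), $\overline{T}$ is the unique maximal torus of $\overline{G}$ containing $T$, so $\overline{T}^g = \overline{T}$ and $g \in \overline{N}$. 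Since also $g \in G$, we conclude $g \in \overline{N} \cap G = N(G,T)$.

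There is no real obstacle here: the lemma is essentially the observation that made Steinberg and Springer's criterion useful in the first place. The only point requiring a moment's care is to verify that $\overline{T}^g$ is a maximal torus of $\overline{G}$, which holds because $g \in G \le \overline{G}$ acts by an inner automorphism of $\overline{G}$, and to apply condition $(i)$ of Lemma \ref{car} to the specific subgroup $D = T$ rather than to all of $\overline{T}$ — this is precisely the generality already built into the lemma's statement.
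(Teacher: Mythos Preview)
Your proof is correct and follows essentially the same approach as the paper: both arguments reduce the reverse inclusion to Lemma~\ref{car}. The only difference is which equivalent condition is invoked --- the paper uses condition~$(ii)$, observing that $g \in N_G(T)$ normalises $C_{\Go}(T)^0 = \To$, whereas you use condition~$(i)$ and the uniqueness of the maximal torus containing $T$; your route is arguably slightly more direct.
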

\begin{proof}
It is clear that $N(G,T) \le N_G(T)$. 
If $g \in N_G(T)$, then $g$ lies in the normaliser of $C_{\Go}(T)^0.$ By Lemma \ref{car}, $C_{\Go}(T)^0 = \To$ since $T$ is nondegenerate. So   $N(G,T) \ge N_G(T)$.
\end{proof}

It is straightforward that if $\To_1$ and $\To_2$ as above are conjugate by an element in $\Go_{\sigma}$, then $T_1$ and $T_2$ are too (by the same element that can be chosen in $O^{p'}(\Go_{\sigma})$ since $\Go_{\sigma}=\To_{\sigma}O^{p'}(\Go_{\sigma})$ by \cite[Theorem 2.2.6]{class}). The reverse is known only for nondegenerate tori \cite[Proposition 2.6.2]{carter0}. In fact, if $\To_1$ and $\To_2$ are degenerate and are {\bf not} conjugate  by an element of $\Go_{\sigma},$ then there is a possibility that $T_1$ and $T_2$ are conjugate by such an element. For example, if $\Go$ is of type $G_2$ over $k=\overline{F}_2$ and $\Go_{\sigma}=G_2(2),$ then the two $\Go_{\sigma}$-classes of $\sigma$-stable maximal tori of $\Go$ parametrised by the classes of $W\cong D_{12}$ of size $3$ give rise to a single $G_2(2)$-class. 

\medskip

The goal of this paper is the following.

\medskip\noindent
{\bf Main Goal.} Let $\Go$ be an adjoint simple group and let $\sigma$ be a Frobenius map  on $\Go$ such that $O^{p'}(\Go_{\sigma})'$ is a finite simple group, and  fix a $\sigma$-stable maximal torus $\To$. For all such $\Go$ and $\sigma$, list the $\sigma$-classes  in $H^1(\sigma,W)$ containing $\pi(g^{\sigma} g^{-1})$ such that $\So=\To^g$ is $\sigma$-stable and $N(G,S) \ne N_G(S).$

\medskip

This paper is organised in the following way. In Section 2, we set up the necessary notation and provide some preliminary results. In Section 3, we prove that all maximal tori of a simple group of Lie type are nondegenerate provided the base field is large enough: we provide explicit bounds depending on the type of the root system of the group. In Sections 4 and 5, we consider the remaining classical and exceptional groups respectively. The  results fulfilling the Main Goal are summarised in Theorems \ref{genclassic} and \ref{thexcept}. 

There are several infinite series of $\Go_{\sigma}$-classes of maximal tori of classical groups such that $N_G(S) \ne N(G,S)$.   They occur if $G$ is $PSL_n(q)$, $\Omega_{2n}^{\pm}(2)$ or $PSp_{2n}(q)'$ with $q \in \{2,3\}.$ There are exactly $140$ such classes for exceptional groups.  In addition, statements and proofs of Theorems \ref{genclassic} and \ref{thexcept} yield the converse of Lemma \ref{nongeneq} for simple $G$. While it was previously known, that all tori of $G$ are nondegenerate provided the defining field of $G$ is sufficiently large (see Theorem \ref{carq}), we show that $q=2$ (here $q$ is as in Definition \ref{def4}) is necessary to have $N_G(S) \ne N(G,S)$ with the exception of certain degenerate maximal tori in $PSL_2(3)$, $PSp_{2n}(3),$ the commutator group of $\mathrm{Ree}(3)$ and the Tits group. We summarise these results in the following corollary.

\begin{Cor}\label{cor}
Let $\Go$ be an adjoint simple group with a Frobenius map $\sigma$ such that $G=O^{p'}(\Go_{\sigma})'$ is a finite simple group.  If $\To \le \Go$ is a maximal $\sigma$-stable torus then
\begin{enumerate}
    \item $T$ is nondegenerate if and only if $N_G(T)=N(G,T);$
    \item if $T$ is degenerate, then one of the following holds:
     \begin{itemize}
     \item $q=2;$
     \item $q=3$ and $G$ is either $PSL_2(3)$ or $PSp_{2n}(3)'$ for some $n \ge 2;$
     \item $q=3^{1/2}$ and $G$ is $\prescript{2}{}{G}_2(3^{1/2})'$ (the commutator of the Ree group $\mathrm{Ree}(3)$);
     \item $q=2^{1/2}$ and $G$ is the Tits group $\prescript{2}{}{F}_4(2^{1/2})'$.
\end{itemize}      
\end{enumerate}
\end{Cor}

A full description of $H^1(\sigma,W)$ for all finite simple groups of Lie type can be found in \cite{gager}. We give a description of $H^1(\sigma,W)$ case by case in Sections 4 and 5  which suffices to obtain  the structure of the corresponding maximal tori. We briefly summarise here the information on $H^1(\sigma,W)$ for the cases when there are degenerate tori in $G$.  If $G$ is untwisted (so $\sigma$ is a field automorphism of $\Go$, see Theorems \ref{thaut} and \ref{thfrob}), then $\sigma$ acts trivially on $W$, so $\sigma$-classes coincide with regular conjugacy classes of $W$. We denote the set of conjugacy classes of $W$ by $\mathfrak{C}(W).$ A full description of $\mathfrak{C}(W)$ for simple algebraic groups can be found in \cite{cartweyl}:  each conjugacy class corresponds to an {\bf admissible diagram}. These are interpreted in terms of the underlying root system of the group, and are closely related to Dynkin diagrams. While we use admissible diagrams and the corresponding notation to denote conjugacy classes of Weyl groups of the exceptional groups, we use more intuitive notation for classical groups. Indeed, for a classical group $\Go$, the corresponding Weyl group $W$ is isomorphic to one of the groups $\Sym(n),$ $Sl_n$ (see Section \ref{SecSymp}) or to a specific subgroup of $Sl_n$. Conjugacy classes in  $\Sym(n)$ and $Sl_n$ are parametrised by partitions of $n$ and by  pairs of partitions of $s$ and $n-s$ (where $0 \le s \le n$) respectively; see Sections \ref{SecLin} and \ref{SecSymp}  for details. 

It is more tricky to describe $H^1(\sigma,W)$ for twisted exceptional groups: classes of $\sigma$-conjugate elements in $W$ still correspond to conjugacy classes of a related Weyl group, but do not coincide with them. Nevertheless, it allows us to label elements of $H^1(\sigma,W)$ with admissible diagrams; see Section \ref{exceptcase} for details.

\section{Preliminary results}

In this section, $\Go$ is a simple algebraic group over $k$ (not necessarily of adjoint type), and $\To$ is a $\sigma$-stable maximal torus of $\Go.$ First, we briefly recall the basic definitions from the theory of algebraic groups (for more details see \cite{carter0} and \cite{class}), and then prove some preliminary results.

\begin{Def} Consider the subgroup of $GL_2(k)$ of all matrices of the form
$$\begin{pmatrix}
1 & \lambda \\
0 & 1
\end{pmatrix}, \text{ } \lambda \in k.
$$
This is isomorphic as a group to the additive group of the field $k.$ An algebraic group isomorphic to this is  the {\bf  additive group} and is denoted by ${\bf G}_a.$ Now consider the group $GL_1(k),$ which is isomorphic as a group to the multiplicative group of $k$. An algebraic group isomorphic to $GL_1(k)$ is  the {\bf  multiplicative group} and is denoted by ${\bf G}_m.$ 
\end{Def}

\begin{Def}
The {\bf  character group} $X=X(\To)$  of $\To$ is the set $\mathrm{Hom}(\To, {\bf G}_m)$  of algebraic group homomorphisms from $\To$ to ${\bf G}_m$ with a group operation $\chi_1 + \chi_2$ defined by 
$$(\chi_1 + \chi_2)(t)=\chi_1(t)  \chi_2(t), \text{ } \chi_1, \chi_2 \in X, t \in \To.$$ 
\end{Def}

Let $\Bo$ be a Borel subgroup of $\Go$ containing $\To.$ Then $\Bo$ is a semidirect product $$\Bo = \Uo \rtimes \To,$$ where $\Uo=R_u(B)$ is the unipotent radical of $\Bo.$ Also, $\Bo^{-}=\Uo^{-} \rtimes \To,$ where $\Bo^{-}$ is the unique Borel subgroup such that $\Bo \cap \Bo^{-}=\To$ and $\Uo^{-}=R_u(\Bo^{-}).$ 
Consider the minimal proper subgroups of $\Uo$ and $\Uo^{-}$ that are normalised by $\To.$ These are all connected unipotent groups of dimension 1, so are isomorphic to ${\bf G}_a.$ Thus, $\To$ acts on each of them by conjugation, giving a homomorphism $\To \to \mathrm{Aut}({\bf G}_a).$ However the only algebraic automorphisms of ${\bf G}_a$ are maps $\lambda \mapsto \mu \lambda$ for $0 \ne \mu \in k,$ so  $\mathrm{Aut}({\bf G}_a)$ is isomorphic to ${\bf G}_m.$ Hence, each of these 1-dimensional unipotent groups determines an element of $\mathrm{Hom}(\To, {\bf G}_m)=X.$

\begin{Def}
 The elements of $X$ arising in the way described above are  the {\bf  roots} of $\Go$ with respect to $\To$ (or $\To$-roots of $\Go$). The roots form a finite subset $\Sigma$ of $X,$ which is independent of the choice of Borel subgroup $\Bo$ containing $\To.$ The 1-dimensional unipotent subgroup giving rise to $\alpha \in X$ is a {\bf  root subgroup}; we denote it by $\overline{X}_{\alpha}.$
\end{Def}

Let $\Sigma_{\Go}(\To)$ be the set of all roots of $\Go$ corresponding to $\To$.
By \cite[Theorem 1.9.5 d)]{class}, there is a bijection between $\Sigma_{\Go}(\To)$ and some abstract root system $\Sigma$ that preserve sums. We identify these two sets by this bijection.
\begin{Rem}\label{irrRS}
We list the root systems corresponding to the simple algebraic groups here for convenience. Let $\{a_1, \ldots a_n\}$ be an orthonormal basis of a real $n$-dimensional Euclidean space $\mathbb{E}^n.$ Let $\mathbb{S}^n$ be the set of all $\epsilon=(\epsilon_1, \ldots, \epsilon_n) $ such that $\epsilon_i=\pm 1.$ For such $\epsilon$, let $a_{\epsilon}=\frac{1}{2} \sum_{i=1}^n \epsilon_i a_i.$ We omit the 1's in $\epsilon$ and write it as a sequence of plus and minus signs. For example, $a_{+---}=\frac{1}{2}(a_1-a_2-a_3-a_4).$  We consider the system  $G_2$ as a subset in $\mathbb{E}^3$  where $a = (0,1,-1)$ and $b = (1,-2,1)$ in an orthonormal basis of $\mathbb{E}^3.$

\begin{longtable}{ p{2em} p{27em} }
$A_m:$ & $\Sigma = \{\pm(a_i - a_j) \mid 1 \le  i < j \le m + 1\};$ \\
       & $\Pi=\{a_1 - a_2, a_2 -a_3, \ldots, a_m-a_{m+1}\};$            \\
$B_m:$ &        $\Sigma = \{\pm a_i \pm a_j \mid 1 \le  i < j \le m \} \cup \{\pm a_i| 1 \le i \le m\} ;$\\
       &  $\Pi=\{a_1 - a_2, a_2 -a_3, \ldots, a_{m-1}-a_m, a_{m}\};$ \\
$C_m:$ &  $\Sigma = \{\pm a_i \pm a_j \mid 1 \le  i < j \le m \} \cup \{\pm 2a_i| 1 \le i \le m\} ;$ \\
       &    $\Pi=\{a_1 - a_2, a_2 -a_3, \ldots, a_{m-1}-a_m, 2a_{m}\};$                                                        \\
$D_m:$ &    $\Sigma = \{\pm a_i \pm a_j \mid 1 \le  i < j \le m \};$ \\
       &   $\Pi=\{a_1 - a_2, a_2 -a_3, \ldots, a_{m-1}-a_m, a_{m-1}+a_m\};$ \\
$G_2:$ &    $\Sigma = \{\pm a, \pm b, \pm (a+b), \pm (2a+b), \pm (3a+b), \pm (3a+2b) \};$ \\
       &   $\Pi=\{a,b\};$ \\       
$F_4:$ &    $\Sigma = \{\pm a_i \pm a_j \mid 1 \le i<j \le 4 \} \cup \{\pm a_i \mid 1 \le i<j \le 4 \} \cup \{a_{\epsilon} \mid \epsilon \in \mathbb{S}^4\};$ \\
       &   $\Pi=\{a_2-a_3, a_3-a_4, a_4, a_{+---}\};$ \\     
$E_8:$ &    $\Sigma = \{\pm a_i \pm a_j \mid 1 \le i<j \le 8 \}  \cup \{a_{\epsilon} \mid \epsilon \in \mathbb{S}^8, \prod_{i=1}^8 \epsilon_i=1\};$ \\
       &   $\Pi=\{a_{+------+}, a_7-a_8, a_6-a_7, a_7+a_8, a_5-a_6, a_4-a_5,a_3-a_4, a_2-a_3\};$ \\  
$E_7:$ &    $\Sigma = \{\gamma \in \Sigma_{E_8} \mid \gamma \bot a_1+a_2\};$ \\
       &   $\Pi=\Pi_{E_8} \setminus \{a_2-a_3\};$ \\ 
$E_6:$ &    $\Sigma = \{\gamma \in \Sigma_{E_7} \mid \gamma \bot a_2-a_3\};$ \\
       &   $\Pi=\Pi_{E_8} \setminus \{a_3-a_4,a_2-a_3\}.$ \\ 
\end{longtable}
\end{Rem}

\begin{Th}{\rm \cite[Theorem 1.12.1]{class}}\label{chevgen}
The $\To$-root subgroups $\overline{X}_{\alpha}$ have parametrization $x_{\alpha}(t)$ such that if 
$$n_{\alpha}(t)=x_{\alpha}(t)x_{-\alpha}(-t^{-1})x_{\alpha}(t) \text{ and } h_{\alpha}(t)=n_{\alpha}(t)n_{\alpha}(1)^{-1}$$
for  $\alpha \in \Sigma, t \in k^*,$ then $\To=\langle h_{\alpha}(t)| \alpha \in \Sigma, t \in k^* \rangle.$ Also, 
\begin{equation}\label{ract}
x_{\alpha}(t)^{h_{\beta}(u)}=(x_{\alpha}(u^{\langle \alpha, \beta \rangle}t))=x_{\alpha}(\alpha(h_\beta(u))t).
\end{equation}
\end{Th}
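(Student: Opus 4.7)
The plan is to prove this by reducing to rank one, working inside the subgroups generated by opposite root subgroups. Fix a root $\alpha \in \Sigma$. First I would show that $\overline{G}_\alpha := \langle \overline{X}_\alpha, \overline{X}_{-\alpha} \rangle$ is a connected reductive subgroup of $\Go$ of semisimple rank one, whose derived group is isogenous to $SL_2$ and whose maximal torus is $\To \cap \overline{G}_\alpha$, which equals the image of the coroot $\alpha^\vee:{\bf G}_m \to \To$. (This uses the standard dimension counting and the fact that $\Uo \cap \Uo^{-}=1$, so that $\overline{X}_\alpha$ and $\overline{X}_{-\alpha}$ are non-commuting one-dimensional unipotent subgroups normalised by $\To$.) I would then fix an isogeny $\varphi: SL_2 \to \overline{G}_\alpha'$, and use it to transport parametrisations: set $x_\alpha(t) := \varphi\bigl(\left(\begin{smallmatrix}1 & t\\0 & 1\end{smallmatrix}\right)\bigr)$ and $x_{-\alpha}(t):= \varphi\bigl(\left(\begin{smallmatrix}1 & 0\\t & 1\end{smallmatrix}\right)\bigr)$.

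Having fixed this parametrisation, the identities for $n_\alpha(t)$ and $h_\alpha(t)$ become matrix calculations in $SL_2$. A direct computation shows that in $SL_2$,
$$\begin{pmatrix}1 & t\\0 & 1\end{pmatrix}\begin{pmatrix}1 & 0\\-t^{-1} & 1\end{pmatrix}\begin{pmatrix}1 & t\\0 & 1\end{pmatrix} = \begin{pmatrix}0 & t\\-t^{-1} & 0\end{pmatrix},$$
so $\varphi^{-1}(n_\alpha(t))$ is this antidiagonal matrix and
$$\varphi^{-1}(h_\alpha(t)) = \begin{pmatrix}0 & t\\-t^{-1} & 0\end{pmatrix}\begin{pmatrix}0 & -1\\1 & 0\end{pmatrix} = \begin{pmatrix}t & 0\\0 & t^{-1}\end{pmatrix},$$
which lies in the diagonal torus; pushing forward, $h_\alpha(t) \in \To$, and indeed $h_\alpha$ is nothing other than the coroot $\alpha^\vee$ evaluated at $t$.

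For the generation statement $\To = \langle h_\alpha(t) : \alpha \in \Sigma, t \in k^*\rangle$, I would argue that, because $\Go$ is semisimple, the coroots $\{\alpha^\vee : \alpha \in \Sigma\}$ span $X_*(\To)\otimes \mathbb{Q}$, hence the subgroup generated by the images of the one-parameter subgroups $h_\alpha$ has the same dimension as $\To$. It is also closed and connected (as it is generated by one-parameter subgroups), and divisibility of ${\bf G}_m$ over the algebraically closed field $k$ together with $\To$ being connected of that dimension forces equality with $\To$. Finally, for the conjugation formula (\ref{ract}), the torus $\To$ acts on $\overline{X}_\alpha \cong {\bf G}_a$ by the character $\alpha$, so $x_\alpha(t)^{h_\beta(u)} = x_\alpha(\alpha(h_\beta(u))t)$; evaluating $\alpha$ on $h_\beta(u) = \beta^\vee(u)$ gives $u^{\langle \alpha, \beta\rangle}$ by the defining pairing of roots and coroots.

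The main obstacle is the first step: identifying $\overline{G}_\alpha$ precisely as a rank-one reductive group and consistently choosing the isogeny $\varphi$ so that the parametrisations $x_\alpha$ and $x_{-\alpha}$ are compatible (in particular, so that $n_\alpha(1)$ genuinely acts as the reflection $s_\alpha$ on $\To$ and on the other root subgroups). Everything else is then either an $SL_2$ calculation or a formal consequence of the coroot/character pairing; the generation claim is an easy dimension argument once the connection $h_\alpha = \alpha^\vee$ has been established.
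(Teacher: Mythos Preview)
The paper does not prove this theorem: it is quoted verbatim from \cite[Theorem 1.12.1]{class} and used as a black box, so there is no ``paper's own proof'' to compare against. Your sketch is the standard rank-one reduction argument (isogeny from $SL_2$ onto $\langle \overline{X}_\alpha,\overline{X}_{-\alpha}\rangle$, explicit $2\times 2$ matrix computations for $n_\alpha(t)$ and $h_\alpha(t)$, identification $h_\alpha=\alpha^\vee$, then the root--coroot pairing for \eqref{ract}), and it is essentially the route taken in the cited reference. The delicacy you flag---choosing the isogenies $\varphi$ for all $\alpha$ coherently so that the Chevalley commutator relations and the action of $n_\alpha(1)$ come out with consistent signs---is real, but for the statement as written (generation of $\To$ by the $h_\alpha(t)$ and the single conjugation formula \eqref{ract}) no global compatibility across different roots is needed; each identity involves at most two roots and follows from the rank-one data plus the definition of $\alpha$ as the character by which $\To$ acts on $\overline{X}_\alpha$.
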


\begin{Rem}\label{rem1}
Such $x_{\alpha}(t),$ $n_{\alpha}(t)$ and $h_{\alpha}(t)$ are {\bf Chevalley generators.} Let  $\Go_a$ and $\Go_u$ be adjoint and universal simple algebraic groups over $k$ with the same root system $\Sigma$ as $\Go$ respectively.    By \cite[Theorem 1.12.4]{class}, the Chevalley generators of  $\Go$
 and $\Go_a$ can be chosen so that there is an isogeny  $\xi :\Go \to \Go_a$ (with the kernel $Z(\Go)$) that carries  $x_{\alpha}(t)$ to $x_{\alpha}(t)$ for all $\alpha \in \Sigma,$ $ t \in k.$ Therefore, if $h \in \To \le \Go$, then $\alpha(h)=\alpha(\xi(h))$ for every $\alpha \in \Sigma.$

Let $\sigma$ be a Frobenius map for $\Go,$ so  $\sigma$ induces a Frobenius map on $\Go_a$ since $Z(\Go)$ is a $\sigma$-stable subgroup. We  use the same symbol $\sigma$ for these two  maps which are generally different.   
\end{Rem}

Following \cite{class}, we say that $(\Go, \sigma)$ is a $\sigma${\bf -setup} (over $k$) for a finite group $G$ if $G$ is isomorphic to $O^{p'}(\overline{G}_{\sigma})'$. Let us describe $\sigma$ in a $\sigma$-setup with simple $\Go.$ First, we define the following endomorphisms of $\Go$.

\begin{Th}\label{thaut}
Let $\Go$ be either an adjoint or universal simple algebraic group with root system and fundamental system $\Sigma$ and $\Pi$ respectively. With respect to some Chevalley generators of $\Go$, the following holds:
\begin{enumerate}
    \item For each power  $q=p^a$ of $p$ with $a \in \mathbb{Z},$ there is a unique $\varphi_q \in \mathrm{Aut}(\Go)$ such that 
    $$(x_{\alpha}(t))^{\varphi_a}=x_{\alpha}(t^q)$$
    for all $\alpha \in \Sigma,$ $t \in \overline{F_p}.$ Moreover, $\varphi_q$ is a Frobenius endomorphism if and only if $a >0.$
    \item For each isometry $\rho$ of $\Pi$ there exists a unique automorphism of $\Go$ (as an algebraic group)  $\gamma_{\rho}$ such that 
    $$(x_{\alpha}(t))^{\gamma_{\rho}}=x_{\alpha^{\rho}}(t)$$ for all $\alpha \in \pm \Pi,$ $t \in \overline{F_p}.$
    \item If $\Sigma= B_2,$ $F_4$, or $G_2$, with $p=2,2,$ or $3$ respectively, then there is a unique angle-preserving and length-changing bijection $\rho: \Sigma \to \Sigma$ such that $\Pi^{\rho}=\Pi$ and there is a unique $\psi \in \mathrm{Aut}(\Go)$ such that 
    $$(x_{\alpha}(t))^{\psi}=\begin{cases}
    x_{\alpha^{\rho}}(t) &\text{ if $\alpha$ is long}; \\
    x_{\alpha^{\rho}}(t^p) &\text{ if $\alpha$ is short}.
    \end{cases}$$
    Moreover, $\psi$ is a Frobenius endomorphism and $\psi^2=\varphi_p.$
\end{enumerate}
\end{Th}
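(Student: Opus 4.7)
The plan is to exploit the Steinberg presentation of $\Go$: the group is generated by the Chevalley generators $x_\alpha(t)$, $\alpha \in \Sigma$, $t \in k$, subject to the additivity $x_\alpha(s)x_\alpha(t)=x_\alpha(s+t)$, the Chevalley commutator formulas
\begin{equation*}
[x_\alpha(s), x_\beta(t)] = \prod_{i,j > 0} x_{i\alpha + j\beta}\bigl(c_{ij}^{\alpha\beta}\, s^i t^j\bigr),
\end{equation*}
and the torus/Weyl relations from Theorem \ref{chevgen}. For each of the three candidate maps, I would define it on the generators, verify that all defining relations are preserved, and then invoke the universal property to obtain a well-defined endomorphism. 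Uniqueness is automatic, since the $x_\alpha(t)$ generate $\Go$. Whether the resulting map is an automorphism of abstract groups, an automorphism of algebraic groups, or a Frobenius endomorphism is then read off from the induced map on the parameter $t$ and its differential at the identity.

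For part (1), the map $x_\alpha(t) \mapsto x_\alpha(t^q)$ preserves every defining relation because $t \mapsto t^q$ is a ring endomorphism of $k=\overline{F_p}$ and the structure constants $c_{ij}^{\alpha\beta}$ are integers, hence fixed. When $a>0$ the parameter map is purely inseparable of degree $q$, its differential on the Lie algebra vanishes, and $\varphi_q$ is a Frobenius endomorphism; when $a\le 0$ one uses the bijectivity of $t\mapsto t^p$ on $\overline{F_p}$ to invert, producing an honest algebraic automorphism. For part (2), any isometry $\rho$ of $\Pi$ extends uniquely to an isometry of $\Sigma$. Setting $\gamma_\rho\colon x_\alpha(t) \mapsto x_{\alpha^\rho}(t)$ for $\alpha \in \pm\Pi$ and extending to all roots through conjugation by the $n_\beta(1)$, the Chevalley commutator formulas are preserved because the constants $c_{ij}^{\alpha\beta}$ depend only on the combinatorial geometry of the root system; the parameter $t$ is untouched, so $\gamma_\rho$ is bijective both as a group and as a variety map.

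The genuinely delicate step is part (3), together with the coherent sign normalisation feeding into part (2). In types $B_2$, $F_4$ (with $p=2$) and $G_2$ (with $p=3$) the ratio of squared lengths of long to short roots equals $p$, so there is an angle-preserving bijection $\rho\colon \Sigma\to\Sigma$ swapping long and short roots with $\Pi^\rho=\Pi$; this $\rho$ is not an isometry, and a naive map $x_\alpha(t)\mapsto x_{\alpha^\rho}(t)$ fails to respect mixed long/short Chevalley commutators. The proposed remedy is the formula
\begin{equation*}
\psi\colon x_\alpha(t) \mapsto
\begin{cases}
x_{\alpha^\rho}(t) & \text{if $\alpha$ is long}, \\
x_{\alpha^\rho}(t^p) & \text{if $\alpha$ is short}.
\end{cases}
\end{equation*}
The hard part, and the principal obstacle, is checking that this respects all mixed long/short commutator relations: the relevant structure constants $c_{ij}^{\alpha\beta}$ are divisible by $p$ exactly at the places where the exponent mismatch would otherwise appear, and a case-by-case verification in the three special root systems (as carried out in \cite[\S 1.15]{class} and in Steinberg's original notes) shows that both sides of every relation agree in characteristic $p$. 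Once this is established, the universal property produces $\psi$, and $\psi^2=\varphi_p$ follows by composing the definition with itself on generators: each root returns to itself, long or short, with a total $p$-power on the parameter.
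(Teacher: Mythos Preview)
The paper does not prove this theorem; its entire proof is the sentence ``See \cite[Theorems 1.15.2 and 1.15.4]{class}.'' Your sketch is essentially the argument contained in that reference (and in Steinberg's Yale lectures): exploit the Steinberg presentation, define the map on generators, verify the defining relations are preserved, and appeal to the universal property. So you are not taking a different route so much as unpacking the citation the paper defers to, and the outline is sound.

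One slip is worth correcting. In part~(1) you say that for $a \le 0$ the bijectivity of $t \mapsto t^p$ on $\overline{F_p}$ yields ``an honest algebraic automorphism.'' For $a<0$ this is false: $\varphi_q$ is then the inverse of a Frobenius, which is an automorphism of the \emph{abstract} group $\Go$ but is not a morphism of varieties (the inverse of $t \mapsto t^p$ is not polynomial). The theorem's phrasing ``$\varphi_q$ is a Frobenius endomorphism if and only if $a>0$'' is precisely recording this distinction; $\mathrm{Aut}(\Go)$ here means abstract automorphisms. Separately, your assertion in part~(2) that the constants $c_{ij}^{\alpha\beta}$ ``depend only on the combinatorial geometry of the root system'' is too strong as stated: their signs depend on the choice of Chevalley basis, and the theorem only asserts that \emph{some} choice of generators makes the map work. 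You acknowledge this elsewhere with ``coherent sign normalisation,'' but the two remarks should be reconciled.
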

\begin{proof}
See \cite[Theorems 1.15.2 and 1.15.4]{class}.
\end{proof}

Now we describe Frobenius endomorphisms in $\sigma$-setups with simple $\Go.$ The following is \cite[Theorem 2.2.3]{class}.
\begin{Th}\label{thfrob}
Let $(\Go, \sigma)$ be a $\sigma$-setup for $G$ over $\overline{F_p}$ with $\Go$ simple. Let $\To$ be a maximal torus of $\Go,$ let $\Sigma$ be the $\To$-root system of $\Go,$ let $\Pi$ be a fundamental system in $\Sigma$, and let $\Bo$ be the corresponding Borel subgroup. Fix a set of Chevalley generators as in Theorem $\ref{chevgen}$. By conjugating $\sigma$ by an inner automorphism of $\Go$, which has no effect on the isomorphism type of $O^{p'}(\overline{G}_{\sigma})$, we may bring $\sigma$ to an element of one of the following {\rm (}{\bf standard}{\rm )} forms:
\begin{enumerate}
    \item $\sigma = \varphi_{q} \gamma_{\rho}$ for some (possibly trivial) $\rho \in \mathrm{Aut}(\Pi)$ and a positive power $q$ of $p$, in which case $q(\sigma)=q;$
    \item $\Sigma=B_2, F_4,$ or $G_2$, with $p=2,2,$ or $3$ respectively, and $\sigma= \varphi_{p^a} \psi$ for $a \ge 0$, in which case $q(\sigma)= p^{a+ (1/2)}$ and $\sigma= \psi^{2a+1}.$ 
\end{enumerate}
Moreover, if $\sigma$ normalises $\To$ and $\Bo$ then the inner automorphism adjusting $\sigma$ may be taken to be by an element of $\To.$ 
\end{Th}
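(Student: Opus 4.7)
The plan has three stages: first, conjugate $\sigma$ so that it normalises the specified $(\Bo, \To)$; second, extract from this the combinatorial data (a permutation of $\Pi$ together with a family of scalars and exponents); third, split into cases according to whether the permutation preserves root lengths. For the first stage I would invoke the Lang--Steinberg theorem, which produces some $\sigma$-stable Borel--torus pair $(\Bo', \To')$ in $\Go$. Since all Borel--torus pairs are $\Go$-conjugate, there is $g \in \Go$ with $g(\Bo', \To')g^{-1} = (\Bo, \To)$, and conjugating $\sigma$ by $g$ yields a new Frobenius map that normalises $(\Bo, \To)$. The fixed-point group of this new map equals $g\Gs g^{-1}$ and is isomorphic to $\Gs$, so this substitution leaves the isomorphism type of $O^{p'}(\Gs)$ untouched.

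For the second stage, once $\sigma$ normalises $\To$ and $\Bo$ it permutes the $\To$-root subgroups of $\Uo$ and preserves the set of positive roots, hence induces an isometry $\rho$ of $\Pi$. For each $\alpha \in \Pi$ one has $x_\alpha(t)^\sigma = x_{\alpha^\rho}(c_\alpha t^{q_\alpha})$ for some $c_\alpha \in k^*$ and some power $q_\alpha$ of $p$, the latter because the induced endomorphism of the one-parameter root subgroup $\mathbf{G}_a$ must be a Frobenius power for $\sigma$ to be a Frobenius map. By formula \eqref{ract}, conjugating by $h \in \To$ rescales each $c_\alpha$ by $\alpha(h)$; when $\rho$ is length-preserving I can solve the resulting linear system over the character group to set all $c_\alpha = 1$. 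This also establishes the final clause of the theorem: when $\sigma$ already normalises $\To$ and $\Bo$, the adjusting inner automorphism can be chosen inside $\To$.

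The third stage fixes the exponents $q_\alpha$. Applying $\sigma$ to the Chevalley commutator relations inside each rank-two subsystem and comparing with the image $[x_{\alpha^\rho}(c_\alpha s^{q_\alpha}), x_{\beta^\rho}(c_\beta t^{q_\beta})]$ forces constraints between the $q_\alpha$'s. In case (1), where $\rho$ preserves root lengths, connectedness of the Dynkin diagram forces all $q_\alpha$ to a common value $q = p^a > 0$, giving $\sigma = \varphi_q \gamma_\rho$. Case (2) arises when $\rho$ interchanges long and short roots, which is possible only when the squared-length ratio ($2$ or $3$) equals $p$, i.e.\ in types $B_2, F_4, G_2$ with $p = 2, 2, 3$; here the commutator analysis forces $q_\alpha = p^a$ on long roots and $q_\alpha = p^{a+1}$ on short roots (or vice versa), and one recognises $\sigma$ as $\varphi_{p^a}\psi$ with $\psi$ as in Theorem \ref{thaut}(3), so $\psi^2 = \varphi_p$ yields $q(\sigma) = p^{a + 1/2}$ and $\sigma = \psi^{2a+1}$. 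The main obstacle I anticipate is precisely case (2): because several structure constants vanish or degenerate in characteristics $2$ and $3$, consistency of the mixed $p$-power twist between long and short roots must be verified by explicit computation in each of $B_2, F_4, G_2$, relying on the special existence of a length-changing bijection $\rho$ in those three configurations.
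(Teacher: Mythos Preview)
The paper does not supply its own proof of this theorem; immediately before the statement it says ``The following is \cite[Theorem 2.2.3]{class}'' and moves on. So there is nothing in the paper to compare your argument against beyond that citation.

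Your three-stage outline is, in fact, the standard argument one finds in that reference (and in Steinberg's lecture notes and Carter's books): use Lang--Steinberg to arrange that $\sigma$ stabilises a chosen pair $(\Bo,\To)$; read off the induced bijection $\rho$ of $\Pi$ together with the scalars $c_\alpha$ and $p$-power exponents $q_\alpha$ on each simple root group; kill the $c_\alpha$ by a further conjugation inside $\To$; and finally use the Chevalley commutator formula on rank-two subsystems to force all $q_\alpha$ to coincide (case~(1)) or to differ by exactly one factor of $p$ between long and short roots (case~(2)). So your approach is correct and matches the cited source.

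Two small points worth tightening. First, you call the induced permutation $\rho$ an ``isometry of $\Pi$'', but in case~(2) it is only angle-preserving and is explicitly length-changing; this is precisely the distinction the paper draws between parts~(2) and~(3) of Theorem~\ref{thaut}. Second, you carry out the normalisation $c_\alpha = 1$ only in the length-preserving case; the same torus-conjugation argument is needed (and works) in case~(2) as well, since $\Pi$ still spans a finite-index sublattice of $X(\To)$ and $k$ is algebraically closed. Neither of these is a genuine gap, just places where the write-up should be made uniform across the two cases.
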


A $\sigma$-stable torus $\To$ that lies in a $\sigma$-stable Borel subgroup of $\Go$ is  $\sigma${\bf -split}. By \cite[Theorem 2.1.6]{class}, there always exists a $\sigma$-split torus  $\To$. Hence, in view of Theorem \ref{thfrob}, we assume that $\sigma$ is in a standard form and fix the following notation.

\begin{Not}\label{not}
Let $(\Go, \sigma)$ be a $\sigma$-setup with simple algebraic $\Go$. Let $\To$ be a $\sigma$-split maximal torus of $\Go$ lying in a $\sigma$-stable Borel subgroup $\Bo$. We choose Chevalley generators such that $\sigma$ is in a standard form.
\end{Not}

\begin{Def}\label{def4}
Let $\sigma$ be a Frobenius map of $\Go$ and let $\To$ be a $\sigma$-split maximal torus. The map  $\sigma$  acts on $X(\To)$ via $$\alpha^{\sigma}(t) = (\alpha(t))^{\sigma} \text{ for all } \alpha \in X, t \in \To.$$ Let $\delta$ be the smallest integer such that $\sigma^{\delta} = m I $ where $m \in \mathbb{N}$ and $I$ is the identity map on $X.$ In fact, $m$ is a power of $p$ and we define $q= q(\sigma)$ to be the solution of the equation $m=q^{\delta}.$ 
\end{Def}

It is often more convenient to work with the universal or other nonadjoint version of $\Go$. For example, if $\Go$ is universal, then $T=\To_{\sigma}.$ The following lemma allows us to do so.

\begin{Lem}\label{univ}
 Let $\xi$ be as in Remark $\ref{rem1}$. Let $\So \le \Go$ and $\To=\xi(\So) \le \Go_a$ be maximal $\sigma$-stable tori. The torus $T:=\Ts \cap O^{p'}((\Go_a)_{\sigma})$ is nondegenerate with respect to $\To$ if and only if the torus $S:=\Ss \cap O^{p'}(\Go_{\sigma})$ is nondegenerate with respect to  $\So$.  
\end{Lem}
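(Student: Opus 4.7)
The plan is to reduce nondegeneracy to a root-vanishing criterion via Lemma \ref{car}(iii) and transport that criterion through $\xi$, reducing the claim to the identity $\xi(S) = T$. By Remark \ref{rem1}, with compatible Chevalley generators the isogeny $\xi \colon \Go \to \Go_a$ identifies the $\So$-root system of $\Go$ with the $\To$-root system of $\Go_a$: if $\beta$ and $\alpha$ correspond, then $\beta(h) = \alpha(\xi(h))$ for every $h \in \So$. Hence a root $\beta$ vanishes on $S \subseteq \Ss$ exactly when the corresponding root $\alpha$ vanishes on $\xi(S) \subseteq \Ts$, and Lemma \ref{car}(iii) translates nondegeneracy of $S$ (resp.\ $T$) into the absence of such vanishing on $S$ (resp.\ on $T$).

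It therefore suffices to establish $\xi(S) = T$. The inclusion $\xi(S) \subseteq T$ is routine: $\xi$ sends $\Ss$ into $\Ts$ and $p$-elements to $p$-elements, so $\xi(O^{p'}(\Go_\sigma)) \subseteq O^{p'}((\Go_a)_\sigma)$. For the reverse, the key point is that $Z := \ker \xi = Z(\Go)(k)$ is a $p'$-group, since its elements are roots of unity in a central torus of $\Go$ and $k = \overline{F_p}$ contains no nontrivial $p$-th roots of unity. The Lang--Steinberg sequence associated with the central isogeny $\xi$,
\[
1 \to Z_\sigma \to \Go_\sigma \xrightarrow{\xi} (\Go_a)_\sigma \to H^1(\sigma, Z) \to 1,
\]
then shows that $(\Go_a)_\sigma / \xi(\Go_\sigma)$ is a $p'$-group. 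Consequently any $p$-element $u \in (\Go_a)_\sigma$ admits a $p$-element lift $\tilde u \in \Go_\sigma$, obtained as the $p$-part of any preimage (the uniqueness of Jordan decomposition together with $Z$ being $p'$ forces the $p'$-part of the preimage to land in $Z$, so the $p$-part still maps to $u$). Given $t \in T$, write $t = u_1 \cdots u_n$ with each $u_i$ a $p$-element of $(\Go_a)_\sigma$; the product $s := \tilde u_1 \cdots \tilde u_n$ lies in $O^{p'}(\Go_\sigma)$, satisfies $\xi(s) = t$, and lies in $\Ss$ because $\xi(s) \in \Ts$ and $\xi^{-1}(\Ts) \cap \Go_\sigma = \Ss$. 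Hence $s \in S$ and $t \in \xi(S)$.

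With $\xi(S) = T$ in hand, the desired equivalence follows immediately from the root transport of the first paragraph. The main obstacle is the lifting step: securing a preimage of $t$ in $S$, not merely in $\Ss$. Everything there rests on the $p'$-character of $Z(\Go)(k)$, which produces unique $p$-element lifts of $p$-elements; given this, Jordan decomposition and the Lang--Steinberg cohomology sequence finish the job, while all other ingredients (compatibility of $\xi$ with roots, the $C_{\Go}$-criterion of Lemma \ref{car}) are essentially formal.
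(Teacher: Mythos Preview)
Your proof is correct and follows the same strategy as the paper: reduce to the root-vanishing criterion of Lemma~\ref{car}(iii), transport roots through $\xi$ via Remark~\ref{rem1}, and establish $\xi(S)=T$. The only difference is that the paper obtains $\xi(O^{p'}(\Go_\sigma))=O^{p'}((\Go_a)_{\sigma})$ (and hence $\xi(S)=T$) by a direct citation of \cite[Theorem~2.2.6]{class}, whereas you supply a self-contained argument via the Lang--Steinberg exact sequence and Jordan decomposition, using that $Z(\Go)$ is a $p'$-group; your route is longer but avoids the external reference.
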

\begin{proof}
Let $\alpha \in \Sigma.$ As was mentioned in Remark \ref{rem1}, $\alpha(s)=\alpha(\xi(s))$ for every $s \in \So$. By  \cite[Theorem 2.2.6]{class},
$$\xi(O^{p'}(\Go_\sigma))=O^{p'}((\Go_a)_{\sigma}),$$
so $T=\xi(S)$ and $\alpha(t)=1$ for all $t \in T$ if and only if $\alpha(s)=1$ for all $s \in S.$ 
\end{proof}

The following lemma allows us to work with roots related to the fixed maximal torus $\To$ only.

\begin{Lem}{\rm \cite[Lemma 1.2]{buturl}} \label{butur}
Let $g^{ \sigma} g^{-1} \in \overline{N} $ and $\pi(g^{\sigma}g^{-1}) = w.$ Then $(\overline{T}^g)_{\sigma} = (\overline{T}_{\sigma w})^g$, where $w$ acts on $\To$ by
conjugation.
\end{Lem}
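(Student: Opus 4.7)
The plan is to unwind the definitions using the identity $g^{\sigma} = ng$ where $n := g^{\sigma} g^{-1}$. By hypothesis $n \in \overline{N}$ with $\pi(n) = w$, so $n$ normalises $\overline{T}$.

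First I would observe that every element of $\overline{T}^g$ has the form $t^g = g^{-1} t g$ for some $t \in \overline{T}$. Then I would compute $(t^g)^{\sigma}$ directly: since $\sigma$ is a group homomorphism and $g^{\sigma} = ng$,
\[
(t^g)^{\sigma} = (g^{\sigma})^{-1} t^{\sigma} g^{\sigma} = (ng)^{-1} t^{\sigma} (ng) = \bigl(n^{-1} t^{\sigma} n\bigr)^g.
\]
Hence $t^g \in (\overline{T}^g)_{\sigma}$ if and only if $n^{-1} t^{\sigma} n = t$.

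Next I would note that because $\overline{T}$ is abelian, conjugation by $n$ on $\overline{T}$ depends only on the coset $\pi(n) = w \in W$; this is exactly the action of $w$ on $\overline{T}$ referred to in the statement. Thus the condition $n^{-1} t^{\sigma} n = t$ is equivalent to $(t^{\sigma})^w = t$, i.e.\ $t \in \overline{T}_{\sigma w}$. Passing to $g$-conjugates gives the claimed equality $(\overline{T}^g)_{\sigma} = (\overline{T}_{\sigma w})^g$.

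There is no substantial obstacle here: the proof is a one-step rewriting once $g^{\sigma}$ is replaced by $ng$. The only things to check are that $\sigma$ is a group homomorphism (so the computation of $(t^g)^{\sigma}$ is legitimate, including $\sigma^{\sigma}$ inverting properly), and that the conjugation action of $n$ on $\overline{T}$ genuinely factors through $\pi$, which is where the abelianness of $\overline{T}$ enters.
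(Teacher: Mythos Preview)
Your argument is correct and is exactly the standard one-line computation behind this lemma. The paper does not give its own proof here but simply cites \cite[Lemma~1.2]{buturl}; the proof there is the same rewriting you carry out, so there is nothing to compare. (One small typo: in your final paragraph ``$\sigma^{\sigma}$ inverting properly'' should presumably read ``$(g^{-1})^{\sigma}=(g^{\sigma})^{-1}$''.)
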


Let $\overline{S}$ be an arbitrary $\sigma$-stable maximal torus of $G$, so $\overline{S}=\To^g$ for some $g \in \Go.$
By Lemma~\ref{butur}, there exists a root element with respect to $\So=\To^g$ centralising $\Ss$ if and only if there exists a root element with respect to $\To$ centralising $\To_{\sigma w}.$

\section{Restriction on $q$}

Carter \cite[Proposition 3.6.6]{carter0} proves the following.

\begin{Th}\label{carq}
Let $\Go$ be a connected reductive algebraic group with Frobenius map $\sigma.$ All the maximal tori of $\Go_{\sigma}$ are nondegenerate provided $q(\sigma)$ is sufficiently large.
\end{Th}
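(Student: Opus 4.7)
The plan is to reduce nondegeneracy to a counting inequality, via Lemmas \ref{car} and \ref{butur}. By Lemma \ref{sopr}, every $\sigma$-stable maximal torus is $\Go_{\sigma}$-conjugate to some $\To^g$ with $\pi(g^\sigma g^{-1})=w$, and by Lemma \ref{butur}, $(\To^g)_\sigma = (\To_{\sigma w})^g$ where $\sigma w$ denotes the twisted action on $\To$. Since $g$ identifies the root system of $\Go$ relative to $\To^g$ with that relative to $\To$, by Lemma \ref{car}(iii) the torus $\To^g$ is nondegenerate if and only if, for every $\alpha \in \Sigma$, the restriction $\alpha\big|_{\To_{\sigma w}}$ is nontrivial. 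Because $|H^{1}(\sigma,W)|\le |W|$ is finite, it suffices to produce, for each $w \in W$ and each $\alpha \in \Sigma$, a bound on $q(\sigma)$ beyond which this nontriviality holds.

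The key input is an order comparison, which I would get from the standard formula
\begin{equation*}
|\To_{\sigma w}| \;=\; \bigl|\det\bigl(\sigma w - 1 \mid X(\To) \otimes \mathbb{R}\bigr)\bigr|.
\end{equation*}
By Definition \ref{def4}, the action of $\sigma$ on $X(\To)$ is of the form $q\tau$ with $\tau$ of finite order, so the right-hand side is a polynomial in $q$ of degree $n=\dim \To$ with leading term $\pm q^n$. If $\alpha$ were trivial on $\To_{\sigma w}$ then $\To_{\sigma w}\le \ker\alpha$; writing $K=(\ker\alpha)^{0}$, a subtorus of dimension $n-1$ preserved up to a finite index by $\sigma w$, the same determinantal formula (applied to $K$) together with $|\ker\alpha : K|$ bounded independently of $q$ yields $|(\ker\alpha)_{\sigma w}|=O(q^{n-1})$. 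Hence for $q$ large the inequality $|\To_{\sigma w}|>|(\ker\alpha)_{\sigma w}|$ forces $\To_{\sigma w}\not\le \ker\alpha$, giving the desired nondegeneracy.

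Taking the maximum of the finitely many bounds, one across all roots $\alpha \in \Sigma$ and all $\sigma$-conjugacy classes in $W$, produces a single threshold $q_0$ beyond which every $\sigma$-stable maximal torus is nondegenerate. The main obstacle I anticipate is a careful treatment of the twisted case: if $\tau\ne 1$ then the polynomial $|\det(\sigma w -1)|$ in $q$ can have low-order zeros or small values for small $q$ (these are exactly the ``small field'' exceptions the subsequent sections aim to catalogue), and one must check that the leading coefficient $|\det(\tau w)|=1$ really does dominate the degree-$(n-1)$ contribution from $(\ker\alpha)_{\sigma w}$. Once this eventual dominance is verified, the existence of $q_0$ follows, and the quantitative version needed later in the paper is a matter of making the polynomial inequality explicit.
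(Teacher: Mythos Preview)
Your order-comparison strategy can be made to work, but as written there is a real gap. The subtorus $K=(\ker\alpha)^{0}$ is \emph{not} in general preserved by $\sigma w$, even up to finite index. On characters $\sigma w$ acts as $q\,\sigma_0 w$ with $\sigma_0 w$ a finite-order isometry of $X_{\mathbb{R}}$, and one checks that $(\sigma w)\bigl((\ker\alpha)^0\bigr)=\bigl(\ker((\sigma_0 w)^{-1}\alpha)\bigr)^0$; when $(\sigma_0 w)^{-1}\alpha\ne\pm\alpha$ these are two distinct codimension-one subtori, meeting in codimension two rather than finite index. (Take $\Go=SL_3$, $w$ a $3$-cycle, $\alpha=a_1-a_2$ for a concrete instance.) Hence the determinantal formula cannot be applied to $K$, and your bound $|(\ker\alpha)_{\sigma w}|=O(q^{n-1})$ is unjustified. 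The repair is to replace $\ker\alpha$ by $H=\bigcap_{k=0}^{r-1}\ker\bigl((\sigma_0 w)^k\alpha\bigr)$, where $r$ is the order of $\sigma_0 w$: since $\To_{\sigma w}$ is pointwise fixed by $\sigma w$, it lies in every $(\sigma w)^k(\ker\alpha)$, hence in $H$; and $H$ \emph{is} $\sigma w$-stable, of dimension at most $n-1$, with component group bounded independently of $q$. Then Lang--Steinberg on $H^0$ gives $|H_{\sigma w}|=O(q^{n-1})$, which contradicts $|\To_{\sigma w}|\sim q^n$ for large $q$, completing the argument.

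For comparison, Carter's proof (the one adapted in Theorem~\ref{myq}) proceeds quite differently. One first observes that a character $\chi\in X$ vanishes on $\To_{\sigma w}$ if and only if $\chi\in(\sigma w-1)X$; so if $\alpha$ vanishes, write $\alpha=(q\sigma_0 w-1)\chi$ with $0\ne\chi\in X$. Choosing a $\langle W,\sigma_0\rangle$-invariant Euclidean metric on $X_{\mathbb{R}}$ with $|\chi|>1$ for every nonzero lattice point, the triangle inequality gives $|\alpha|\ge q|\sigma_0 w\,\chi|-|\chi|=(q-1)|\chi|>q-1$. Thus no root can vanish once $q$ exceeds one plus the length of a longest root. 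This lattice-metric argument is what makes the explicit constants of Theorem~\ref{myq} accessible; your counting approach, even after the fix above, gives only an asymptotic comparison of polynomials in $q$ and would not readily produce those sharp thresholds.
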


While the proof in \cite{carter0} does not provide an explicit bound in the general case, it is possible to deduce one for simple $\Go$. In the next theorem we adjust the proof of Theorem \ref{carq} to obtain such a bound. The proof uses properties of $\To_{\sigma}$ which $T=\To_{\sigma} \cap O^{p'}(\Go_{\sigma})$ does not possess in general. So we formulate the theorem for the universal $\Go$, since here $O^{p'}(\Go_{\sigma})=\Go_{\sigma}$. This result can be used for the adjoint $\Go$ via Lemma \ref{univ}.

\begin{Th}\label{myq}
Let $(\Go, \sigma)$ be a $\sigma$-setup  with universal simple $\Go$ and let $\Sigma$ be the root system of $\Go$. All maximal tori of $\Go_{\sigma}$ are nondegenerate provided $q(\sigma) > m$ where  
\begin{enumerate}
    \item $m=5$ if $\Sigma$ is of type $D_l$;
    \item $m=4$ if $\Sigma$ is of type $B_l$, $C_l$ or $E_6$;
    \item $m=3$ if $\Sigma$ is of type $A_l$, $G_2$ or $E_7$;
    \item $m=2$ if $\Sigma$ is of type $F_4$ or $E_8$. 
\end{enumerate}
\end{Th}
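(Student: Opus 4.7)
My plan is the following. By Lemma~\ref{butur}, every $\sigma$-stable maximal torus $\Ss$ is conjugate to some $\To_{\sigma w}$, and a root of $\Go$ with respect to $\So=\To^g$ vanishes on $\Ss$ if and only if the corresponding root with respect to $\To$ vanishes on $\To_{\sigma w}$. So it suffices to prove that for every $w\in W$ and every $\alpha\in\Sigma$ there is some $t\in\To_{\sigma w}$ with $\alpha(t)\neq1$, provided $q(\sigma)>m$.

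For the split torus $\To_\sigma$ (i.e.\ $w=1$), I would use the Chevalley parametrisation of Theorem~\ref{chevgen} directly. In the universal setting, every element of $\To_\sigma$ has the form $h=h_{\alpha_1}(t_1)\cdots h_{\alpha_l}(t_l)$ with $t_i\in F_q^{*}$, and by (\ref{ract}) the value of $\beta\in\Sigma$ on $h$ is $\prod_i t_i^{\langle\beta,\alpha_i\rangle}$. Thus $\beta$ vanishes on $\To_\sigma$ iff $(q-1)\mid\langle\beta,\alpha_i\rangle$ for every $i$. Since the Cartan integers lie in $\{0,\pm1,\pm2,\pm3\}$ and are not simultaneously zero for any nonzero root $\beta$, this fails as soon as $q-1$ exceeds the maximal nonzero Cartan integer arising for the given type. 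This already settles split tori with a bound no larger than the $m$ of the theorem.

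For general $w$, I would encode degeneracy as a lattice condition. The restriction map $X=X(\To)\to X(\To_{\sigma w})$ is surjective with kernel $((\sigma w)^{*}-1)X$, and by Theorem~\ref{thfrob} the action of $\sigma$ on $X$ is $q\tau$ for an isometry $\tau$ of finite order coming from a (possibly trivial) diagram automorphism; the Suzuki--Ree case requires only a small modification using $q(\sigma)$ and $\tau=\psi^{*}$. Setting $\gamma=\tau w^{*}$, the degeneracy condition becomes
\[
\alpha=(q\gamma-1)\beta\quad\text{for some }\beta\in X.
\]
Since $\gamma$ is a finite-order isometry of $V=X\otimes\mathbb{R}$, a spectral decomposition $\beta=\sum_{\zeta}\beta_\zeta$ into $\gamma$-eigenspaces over $\mathbb{C}$ gives
\[
\|(q\gamma-1)\beta\|^{2}=\sum_{\zeta}|q\zeta-1|^{2}\|\beta_\zeta\|^{2}\geq(q-1)^{2}\|\beta\|^{2},
\]
because $|q\zeta-1|\geq q-1$ for every $\zeta$ on the unit circle. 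Hence $\|\beta\|\leq\|\alpha\|/(q-1)$. Combined with the integrality constraint $\beta\in X$, this leaves only a finite list of candidates for $\beta$, and for each root system I would then run through the conjugacy classes of $\gamma$ in $W\rtimes\langle\tau\rangle$ and verify whether $(q\gamma-1)\beta$ can equal a root.

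The main obstacle is to make this enumeration tight in types where $X$ contains unusually short vectors, so that the norm estimate alone leaves too many potential $\beta$. The spin fundamental weights of $B_l$ and $D_l$ have norm $\sqrt{l/4}$, and the minuscule weights of $E_6$ have norm $\sqrt{4/3}$, both well below $\sqrt{2}$. For these types I would complement the norm bound with the congruence $\alpha+\beta\in q\gamma X=qX$ (which follows directly from $\alpha=(q\gamma-1)\beta$), and then eliminate the remaining candidates by inspecting the specific conjugacy classes of $W$. The bookkeeping for $D_l$ and $E_6$ (and, to a lesser extent, $C_l$) is the technical heart of the argument and is what forces the larger values of $m$ in those types; the ranks of $E_7,E_8,F_4,G_2$ are small enough that the conjugacy-class count is manageable by hand with Carter's admissible diagrams.
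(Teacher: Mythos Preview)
Your norm inequality $\|(q\gamma-1)\beta\|\ge(q-1)\|\beta\|$ is exactly the estimate at the heart of the paper's proof (inherited from Carter's argument for Theorem~\ref{carq}). Where you diverge is the completion. You propose to list the short $\beta\in X$ permitted by the norm bound and then run through the conjugacy classes of $\gamma$, checking for each whether $(q\gamma-1)\beta$ can equal a root. For the exceptional types that is a finite if tedious check, but for the classical families $B_l,C_l,D_l$ neither the list of candidate $\beta$ nor the set of conjugacy classes is uniform in $l$, and your outline gives no mechanism for closing infinitely many cases. The congruence $\alpha+\beta\in qX$ is correct but does not obviously rescue this, so as written you have a plan rather than a proof.

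The paper avoids enumeration entirely. The norm inequality already forces $\beta=0$ once $q-1$ exceeds $\|\alpha_{\mathrm{long}}\|/\ell$, where $\ell$ is the length of a shortest nonzero vector of $X$; so one only needs a lower bound on $\ell$. For every non-$A$ type the paper uses that $\Delta\chi\in\mathbb{Z}\Sigma$ for all $\chi\in X$, with $\Delta=|X/\mathbb{Z}\Sigma|$ as in Table~\ref{t:delta}, to obtain $\ell\ge(\text{shortest root length})/\Delta$. Since $\Delta$ is a fixed small integer for each type, this bound is uniform in $l$ and yields the stated $m$ with no conjugacy-class work whatsoever. Only for $A_l$, where $\Delta=l+1$ is unbounded, does the paper compute the shortest weight directly and find $\ell\ge1/\sqrt{2}$. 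This single lattice observation replaces your entire conjugacy-class program; you were essentially one step away from it.
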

\begin{proof} 
Let $\To$ be a maximal $\sigma$-stable torus of $\Go$. Recall that $X=X(\To)$ is a free abelian group of rank $\dim (\To)=l$ where $l$ is the rank of the root system $\Sigma=\Sigma_{\Go}(\To)$. Note that  $\mathbb{Z} \Sigma$  is a free abelian subgroup of $X$ of the same rank. Let $\Delta$ be $|X/\mathbb{Z} \Sigma|.$  Since $\Go$ is universal, $\Delta$ is as in Table \ref{t:delta} by \cite[Theorem 1.12.5]{class}.
For $i \in \{1, \ldots, l\},$ let $\lambda_i \in X$ be the fundamental weights (see \cite[Definition 1.14.2]{class}), so $X$ is $\mathbb{Z}$-spanned by $\{\lambda_1, \ldots, \lambda_l\}$ and  
\begin{equation}\label{fwieghts}
    \alpha_i= \sum_{j=1}^l a_{ij} \lambda_j
\end{equation}
where $\Pi=\{\alpha_1, \ldots, \alpha_l\}$ is the set of fundamental roots of $\Sigma$ and $(a_{ij})$ is the Cartan matrix of $\Sigma$ (see \cite[Section 3.6]{carter}).
We can consider $X$ as a subset of the real Euclidean space $E \subseteq \mathbb{E}^{n}$ containing  $\Sigma$ and spanned by the set $\Pi$ of fundamental roots as in Remark \ref{irrRS}. Therefore, $X_{\mathbb{R}}=X \otimes_{\mathbb{Z}} \mathbb{R}=E$ by \eqref{fwieghts}.

\begin{table}[h]
    \centering
    \begin{tabular}{|c|c|c|c|c|c|c|c|c|c|}
    \hline
      $\Sigma$   & $A_l$ & $B_l$ & $C_l$& $D_l$ &$G_2$ & $F_4$ & $E_6$ & $E_7$ & $E_8$ \\ \hline
       $\Delta$  & $l+1$ & $2$ & $2$& $4$ &$1$ & $1$ & $3$ & $2$ & $1$ \\
       \hline
    \end{tabular}
    \caption{Value of $\Delta = |X/\mathbb{Z} \Sigma|$ for universal $\Go$}
    \label{t:delta}
\end{table}

Recall the action of $\sigma$ on $X$ from Definition \ref{def4}. It is easy to see that this action is linear and it extends by linearity to an action on $X_{\mathbb{R}}.$ We define $\sigma_0:X_{\mathbb{R}} \to X_{\mathbb{R}}$ by $\sigma=q(\sigma)\sigma_0.$ It is routine to check that in our case $\sigma_0$ is either trivial (if $\sigma$ is as in $(1)$ of Theorem \ref{thfrob} with trivial $\rho$), or a reflection (if $\sigma$ is either as in $(1)$ of Theorem \ref{thfrob} with $|\rho|=2$, or as in $(2)$ of Theorem \ref{thfrob}) or a rotation (if $\sigma$ is as in $(1)$ of Theorem \ref{thfrob}, $\Sigma=D_4$ and $|\rho|=3$), so it is an isometry of $X_{\mathbb{R}}.$ Therefore, $X_{\mathbb{R}}$ can be regarded as an Euclidean space on which the finite group $\langle W, \sigma_0 \rangle$ acts as a group of isometries. The following paragraph follows from the proof of \cite[Proposition 3.6.6]{carter0}.

We may choose a metric on $X_{\mathbb{R}}$ such that $|\chi| > 1$ for all nonzero $\chi\in X.$ For such a metric, if $\alpha \in \Sigma$ is such that $\alpha(t)=1$ for all $t \in \To_{\sigma},$ then $|\alpha| \ge q-1.$ Thus, if we choose $q$ so that $q>|\alpha|+1$ for all $\alpha \in \Sigma$, then all maximal tori of $\Go_{\sigma}=G$ are nondegenerate. 

Now, for each type of $\Sigma$, we specify the metric on $X_{\mathbb{R}}$ such that $|\chi| > 1$ for all $\chi \in X$ to obtain $|\alpha|$ for a long root in $\Sigma$, and, finally, to obtain explicit values of $q$. Let $d$ be a standard metric of $\mathbb{E}^n$ with respect to the orthonormal basis $\{a_1, \ldots, a_n\}$ as in Remark \ref{irrRS}. Let $d_{\mu}$ be the scaled metric defined by $d_{\mu}(v,u):=\mu \cdot d(v,u)$ for $\mu \in \mathbb{R}$ and $v,u \in \mathbb{E}^n$, and let $|\cdot|_{\mu}$ be the corresponding norm.

Let us first consider the case when $\Delta$ is bounded by a constant, so $\Sigma$ is not of type $A_l.$ If $\chi \in X$, then $\Delta \chi \in \mathbb{Z} \Sigma$. Thus, if $|\alpha|_{\mu}/\Delta > 1$ for all $\alpha \in \mathbb{Z} \Sigma,$ then $|\chi| > 1$ for all nonzero $\chi \in X.$ It is straightforward from Remark \ref{irrRS} that 
\begin{equation*}
\min \{|\alpha|_1 \mid \alpha \in \mathbb{Z} \Sigma\} =
\begin{cases}
1 &\text{ if } \Sigma \text{ is of type } B_l \text{ or } F_4;\\
\sqrt{2} &\text{ otherwise.}
\end{cases}
\end{equation*}
It is routine to find a suitable $\mu \in \mathbb{R}$ and compute $m:=|\alpha|_{\mu}+1$ for a long root $\alpha \in \Sigma$.

\medskip

Now we consider the case when $\Sigma$ is of type $A_l.$ Recall that $X$ is $\mathbb{Z}$-spanned by $\{\lambda_1, \ldots, \lambda_l\}$ where 
\begin{equation*}
    (\lambda_1, \ldots, \lambda_l) = (\alpha_1, \ldots, \alpha_l) A^{-1},
\end{equation*}
and $A$ is the Cartan matrix of $\Sigma$ (it is listed in \cite[Section 3.6]{carter}). It is straightforward from Remark \ref{irrRS} that 
\begin{equation*}
    (\alpha_1, \ldots, \alpha_l) = (a_1, \ldots, a_{l+1})T, 
\end{equation*}
where $T$ is the $(l+1)$ by $l$ matrix 
\begin{equation*}
    \begin{pmatrix}
    1  &   &   &   &   \\
    -1 & 1 &   &   &   \\
       & -1& 1 &   &   \\
       &   & \ddots&\ddots & \\
       &   &   & -1 & 1 \\
       &   &   &    & -1
    \end{pmatrix}.
\end{equation*}
Thus,
\begin{equation*}
    (\lambda_1, \ldots, \lambda_l) = (a_1, \ldots, a_{l+1})TA^{-1}, 
\end{equation*}
and direct calculations show that 
\begin{equation*}
    TA^{-1} =\frac{1}{l+1}
    \begin{pmatrix}
    l & l-1 & l-2 & \ldots & 1 \\
    -1 & l-1 & l-2 & \ldots & 1 \\
    -1 & -2 & l-2 & \ldots & 1 \\
    \vdots &  &  &  & \vdots \\
    -1 & -2 & -3 & \ldots & -l \\
    \end{pmatrix}. 
\end{equation*}
The $k$-th row of the matrix above is obtained from the $(k-1)$-th row by adding $(-1, \ldots, -1,-2)$ to it and then applying  a cyclic shift of the entries to the right. The $j$-th column of $TA^{-1}$ forms the coordinates of $\lambda_j$ with respect to the orthonormal basis $\{a_1, \ldots, a_{l+1}\}$ of $\mathbb{E}^{l+1}.$ We prefer to deal with rows rather than columns, so after transposing $TA^{-1}$ and applying some elementary operations of the rows (over $\mathbb{Z}$) we obtain that $X$ is $\mathbb{Z}$-spanned by the rows of the following matrix 
\begin{equation}\label{aspawnoflam}
     \begin{pmatrix}
    0 & -1 & -1 & \ldots & -1 & l-1 \\
    0 & 0  & -1 & \ldots & -1 & l-2 \\
    \vdots &   &  &  &  & \vdots \\
   0 & 0  & \ldots & 0 & -1 & 1 \\
1/(l+1) & 1/(l+1)  & 1/(l+1) & \ldots & 1/(l+1) & -l/(l+1) \\
    \end{pmatrix}.
\end{equation}
Note that the entries of each row (and, therefore, of each $\chi \in X$) sum to zero. Let $\chi \in X$ have coordinates $(x_1, \ldots, x_{l+1})$ with respect to  $\{a_1, \ldots, a_{l+1}\}$. If at least one of $x_i$ is a nonzero integer, then $|\chi|_1\ge 1.$ Assume that $\chi \ne 0$ and all $|x_i|<1$. It is easy to see from \eqref{aspawnoflam} that all the $x_i$ are nonzero and the minimum of $|\chi|_1$ is achieved by taking $(x_1, \ldots, x_{l+1})$ to be the last row of \eqref{aspawnoflam}. So $|\chi|_1 \ge \sqrt{l^2+l}/(l+1) \ge 1/\sqrt{2}.$ Hence, by taking $\mu > \sqrt{2}$ we obtain  $|\chi|_{\mu}>1$ for all $\chi \in X.$ Since $|\alpha|_{\sqrt{2}}=2$ for all $\alpha \in \Sigma$, it suffices to take $m=2+1=3$ and the theorem follows.
\end{proof}

In the following sections we go through all possibilities of $\sigma$-setups for a simple $G$ case by case. Recall from Lemma \ref{sopr} that conjugacy classes of maximal tori are parametrised by $\sigma$-conjugate classes of $W$. In each case, we add a short description of these classes. To discover the degenerate tori of $G$, it is often convenient to consider the universal or other nonadjoint algebraic group with the same root system and Frobenius map and then use Lemma \ref{univ}. In particular, for classical $\Sigma$, this allows us to work with groups of matrices which is useful for our purposes.      

\section{Classical groups}

We start with the case when $\Go$ is classical, so $\Sigma$ is of type  $A_l$, $B_l$, $C_l$ or $D_l.$  Even though Theorem \ref{myq} restricts $q:=q(\sigma)$ significantly, our methods work in general and we do not make any assumptions on $q$ here.

\subsection{Type $A_l$}\label{SecLin}

Recall that $k=\overline{F_p}.$
Let $n=l+1$ and $\Go=SL_n(k)$, so $\Go=\Go_u$. There is only one nontrivial element $\rho \in \rm{Aut}(\Pi)$ and $\gamma_{\rho}$ acts as the composition of the transpose map and inverse map on $\Go,$ so $\sigma$ is either $\varphi_q$ or $\varphi_q \gamma_{\rho}$ (see Theorem \ref{thfrob}).   The Weyl group $W$ is isomorphic to $\Sym(n)$ and $\sigma \in \{\varphi_q, \varphi_q \gamma_{\rho}\}$ acts trivially on it, so $G$-classes of maximal tori are in one-to-one correspondence with conjugacy classes of elements of $\Sym(n)$ by Lemma \ref{sopr}. Denote by
 $$(n_1)(n_2) \ldots (n_m); \text{ } n_1\le n_2 \le \ldots \le n_m;\text{ } \sum_{i=1}^m n_i =n$$ the conjugacy class of elements of $\Sym(n)$ that are the product of $k$ independent cycles of orders $n_i$. We say that $\tau \in \Sym(n)$ is  {\bf of type} $(n_1)(n_2) \ldots (n_m)$ if it lies in the corresponding class. {\bf The standard representative} of this type is the permutation
$$(1, \ldots , n_1)(n_1+1 , \ldots , n_1+n_2) \ldots (n-n_m+1, \ldots, n).$$
Let $\varepsilon$ be $+$ or $-$ when $\sigma$ is $\varphi_q$ or $\varphi_q \gamma_{\rho}$ respectively.

\begin{Lem}{\rm \cite[Proposition 2.1]{buturl}}\label{but1}
Let $\overline{G}= SL_n(k)$, let $\overline{T}$ be the group of all diagonal matrices with determinant $1$  and let $w$ be the standard representative of type  $$(n_1)(n_2) \ldots (n_m).$$
Let $U$ be the subgroup of $GL_n(k)$ of all block-diagonal matrices $\mathrm{bd}(D_1, \ldots , D_m)$ such that $$D_i= \diag(\lambda_i,\lambda_i^{\varepsilon q}, \ldots , \lambda_i^{(\varepsilon q)^{n_i-1}}), \text{ } \lambda_i^{(\varepsilon q)^{n_i}-1}=1,$$ for all $i \in \{1,2, \ldots, m\}$. Then $ \overline{T}_{\sigma w} =U \cap \overline{G}.$ 
\end{Lem}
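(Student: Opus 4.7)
The plan is to compute $\overline{T}_{\sigma w}$ directly in matrix form. Write $t=\diag(t_1,\ldots,t_n)$. Unwinding the Chevalley generators from Theorem \ref{chevgen}, the Frobenius $\varphi_q$ sends $t$ to $\diag(t_1^q,\ldots,t_n^q)$, and the graph automorphism $\gamma_\rho$ can be realised as $g\mapsto J_0(g^{T})^{-1}J_0^{-1}$ with $J_0$ the antidiagonal matrix; hence $\varphi_q\gamma_\rho$ sends $t$ to $\diag(t_n^{-q},\ldots,t_1^{-q})$. Writing $\varepsilon=\pm 1$, in both cases the action of $\sigma$ on the ordered tuple of diagonal entries is $(t_1,\ldots,t_n)\mapsto(t_{r(1)}^{\varepsilon q},\ldots,t_{r(n)}^{\varepsilon q})$ for $r$ either the identity or the reversal $i\mapsto n+1-i$.

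Next I fix a concrete lift $n_w\in N_{\overline{G}}(\overline{T})$ of $w\in W\cong\Sym(n)$. For $\varepsilon=+$, take $n_w$ to be the signed permutation matrix $\prod_\alpha n_{\alpha}(1)$ whose image in $W$ is the standard representative of cycle type $(n_1)(n_2)\ldots(n_m)$. For $\varepsilon=-$, the naive permutation matrix interacts nontrivially with the reversal coming from $\gamma_\rho$, so I instead choose a lift that compensates for this reversal on each block $\{a,a+1,\ldots,a+n_i-1\}$; this can be arranged by multiplying the naive lift by a suitable element of $\overline{T}$ and by further $n_\alpha(1)$'s. After this adjustment, the equation $\sigma(t)=n_w t n_w^{-1}$ preserves the block decomposition induced by the cycles of $w$.

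Within a single block of length $n_i$ on entries $(t_1,\ldots,t_{n_i})$ the fixed-point equation reduces, after a suitable re-indexing, to
\[
t_j^{\varepsilon q}=t_{j+1}\ (1\le j<n_i),\qquad t_{n_i}^{\varepsilon q}=t_1.
\]
Setting $\lambda_i:=t_1$ and iterating gives $t_{j+1}=\lambda_i^{(\varepsilon q)^{j}}$, while the closure condition becomes $\lambda_i^{(\varepsilon q)^{n_i}-1}=1$. This is exactly the prescription of $D_i$ in the statement. Assembling all the blocks yields $U$ inside $GL_n(k)$, and intersecting with $\overline{G}=SL_n(k)$ simply encodes the determinant-one constraint, giving $\overline{T}_{\sigma w}=U\cap\overline{G}$.

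The main technical obstacle is the bookkeeping in the $\varepsilon=-$ case: a priori the reversal built into $\gamma_\rho$ pairs the diagonal position $i$ with $n+1-i$, which mixes cycle blocks indexed symmetrically about the middle. The substantive step is therefore to verify that a suitable lift of the standard representative $w$ absorbs the reversal and turns the fixed-point equation into the blockwise cyclic-shift equation with $-q$ in place of $q$; once this is accomplished, the recursion in the previous paragraph handles both cases of $\varepsilon$ in a uniform way.
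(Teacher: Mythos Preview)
The paper does not give its own proof of this lemma; it is quoted from \cite{buturl}. Your direct matrix computation is the right idea and is essentially what that reference does. The $\varepsilon=+$ part of your argument is correct.

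The difficulty you describe in the $\varepsilon=-$ case is self-inflicted: it comes from a realisation of $\gamma_\rho$ different from the one the paper actually uses. Section~4.1 states explicitly that ``$\gamma_\rho$ acts as the composition of the transpose map and inverse map on $\Go$'', i.e.\ $\gamma_\rho(g)=(g^{T})^{-1}$ with \emph{no} conjugation by an antidiagonal matrix. On $\overline{T}$ this gives $\gamma_\rho(t)=t^{-1}$ entrywise, with no reversal of the diagonal positions, and on permutation matrices it is the identity (which is why the paper can say that $\sigma$ acts trivially on $W$). With this convention the fixed-point equation on a cycle block of length $n_i$ is uniformly
\[
t_{j+1}=t_j^{\varepsilon q}\quad(1\le j<n_i),\qquad t_1=t_{n_i}^{\varepsilon q},
\]
for both signs of $\varepsilon$, and the description of $D_i$ follows immediately. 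There is no reversal to absorb and no technical obstacle.

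Your proposed fix would not work in any case. Multiplying the lift $n_w$ by an element of $\overline{T}$ leaves its conjugation action on the abelian group $\overline{T}$ unchanged, so cannot undo a reversal; and multiplying by further $n_\alpha(1)$'s changes the image in $W$, contradicting the hypothesis that $w$ is the prescribed standard representative. Indeed, with your antidiagonal version of $\gamma_\rho$ the statement of the lemma is simply false for the literal standard representative (try $n=3$, $w=(1,2,3)$), which is another way to see that the inverse--transpose convention is the intended one.
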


\begin{Th}\label{SL}
Let $\Go$ be $SL_n(k)$ and $\sigma \in \{\varphi_q, \varphi_q \gamma_{\rho}\}$, so  $$\Gs=O^{p'}(\Gs)=SL^{\epsilon}_n(q).$$ Let  $\overline{T}$ be the subgroup of all diagonal matrices in $\Go$. Let $\So=\To^g$ be a maximal $\sigma$-stable torus and let  $w:=\pi(g^{\sigma}g^{-1})$ be of type $(n_1)(n_2) \ldots (n_m),$.  Then $\Ss$ is nondegenerate always,  except when:
\begin{enumerate}
    \item $\varepsilon=+$, $q=2$ and $n_1=n_2=1;$
    \item $\varepsilon=+$, $q=3$, $n=2$ and $n_1=n_2=1.$    
\end{enumerate}
\end{Th}
\begin{proof}
It is well known (\cite[Chapter 11]{carter}) that the $\To$-root elements are  $$x_{\alpha}(t)=E +te_{ij}, \text{ } t \in k,$$
where $\alpha \in \Sigma$ is the root $(a_i-a_j)$ for $i\ne j$,  $E$ is an identity matrix, $e_{ij}$ is the matrix with 1 on place $(i,j)$ and 0 everywhere else.  Let $h =\diag(h_1, \ldots, h_n) \in \To.$ Then $$x_{\alpha}(t)^h=(x_{\alpha}((h_i^{-1}h_j)t))=x_{\alpha}(\alpha(h)t),$$ where the second equality holds by \eqref{ract}. Therefore, $\alpha(h)=h_i^{-1}h_j$ for $h \in \To.$ Thus, $\alpha(h) =1$ for all  $h \in  \overline{T}_{\sigma w}$ if and only if $h_i=h_j$ for all $h \in  \overline{T}_{\sigma w}$.  By Lemma \ref{but1}, it occurs either if $q=3$, $\epsilon =+$, $n=2$ and $w$ is of type $(1)(1),$ or if $q=2$, $\epsilon =+$ and $w$ is of type $(n_1)(n_2) \ldots (n_k),$ $\epsilon =+$ and $w$ is of type $(n_1)(n_2) \ldots (n_k),$ where $n_1=n_2=1;$ here we can take $x_{\alpha}(t)=E+te_{12}.$
\end{proof}

\subsection{Type $C_l$} \label{SecSymp}

Let $n=l$ and $\Go$ be $Sp_{2n}(k)$, the symplectic group associated with the form $$x_1y_{-1}-x_{-1}y_{1} + \ldots + x_{n}y_{-n}-x_{-n}y_{n},$$  where rows and columns in symplectic $(2n \times 2n)$-matrices
are enumerated in the order $$1, 2, \ldots, n, -1, -2,\ldots, -n.$$
There are no nontrivial elements in $\rm{Aut}(\Pi),$ so $\sigma= \varphi_q.$ In this case, $W$ is isomorphic to the group $Sl_n$  of permutations $\tau $ on the set $$\{1, 2 ,\ldots ,n, -1, -2 ,\ldots , -n \}$$ with $\tau(-i)=-\tau(i)$, and $\sigma $ acts trivially on $W$. Omitting the signs, we obtain a homomorphism to $\mathrm{Sym}(n).$ Let $\tau \in Sl_n$ be mapped to a cycle $(i_1i_2 \ldots i_s)$. If $\tau^s (i_1)=i_1$, then $\tau$ is called a positive cycle of length $s$; if $\tau^s (i_1)=-i_1$, then $\tau$ is called a negative cycle of length $s$.
An element of $Sl_n$ is uniquely determined by its decomposition on independent positive and negative cycles. 
 Denote by $$(n_1)(n_2) \ldots (n_s)(\overline{n_{s+1}})(\overline{n_{s+2}}) \ldots (\overline{n_m})$$ the class of elements that are the product of $m$ independent cycles ($s$ positive and $m-s$ negative) where  
 \begin{align*}
  n_1\le n_2 \le \ldots \le n_s  &\text{ are the lengths of the positive cycles}; \\ 
  n_{s+1}\le n_{s+2} \le \ldots \le n_m  &\text{ are the lengths of the negative cycles}; \\
  \sum_{i=1}^m n_i =n.    &
 \end{align*}
  These classes form $\mathfrak{C}(W).$ We say that $\tau \in Sl(n)$ is   of type $(n_1)(n_2) \ldots (n_s)(\overline{n_{s+1}})(\overline{n_{s+2}}) \ldots (\overline{n_m})$ if it lies in the corresponding class. The standard representative of a class is defined by analogy with $\mathrm{Sym}(n).$

\begin{Lem}{\rm \cite[Proposition 3.1]{buturl}}\label{but2}
Let $\overline{G}= Sp_{2n}(k)$, $\sigma =\varphi_q$ and let $\overline{T}$ be the group of all   invertible matrices $\mathrm{bd}(D,D^{-1})$, where $D$ is a diagonal $(n \times n)$ matrix, and let $w$ be the standard representative of type  
$$(n_1)(n_2) \ldots (n_s)(\overline{n_{s+1}})(\overline{n_{s+2}}) \ldots (\overline{n_m}).$$ Let $\varepsilon_i=+$ if $i \le s$ and $\varepsilon_i=-$ otherwise.
 Let $U$ be the subgroup of $Sp_{2n}(\overline{F}_q)$ of all block-diagonal matrices $$\mathrm{bd}(D_1, \ldots , D_m, D_1^{-1}, \ldots , D_m^{-1})$$ such that $$D_i= \diag(\lambda_i,\lambda_i^{ q}, \ldots , \lambda_i^{(q)^{n_i-1}}),$$ $\lambda_i^{( q)^{n_i}-\varepsilon_i 1}=1,$ for all $i \in \{1,2, \ldots, m\}$. Then $\overline{T}_{\sigma w} =U $.
\end{Lem}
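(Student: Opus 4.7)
The plan is to unwind the definition of $\overline{T}_{\sigma w}$ by explicitly computing how the composition of the Frobenius $\sigma = \varphi_q$ and conjugation by a chosen monomial lift of $w$ acts on a generic diagonal element of $\overline{T}$, and then to reduce the resulting fixed-point equations cycle by cycle to the one-variable equations defining $U$.

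First I would fix a monomial lift $\dot w \in N_{\overline{G}}(\overline{T})$ of $w$, for instance using the Chevalley generators $n_\alpha(1)$ of Theorem~\ref{chevgen}. Because $w$ is the standard representative, its positive and negative cycles are supported on consecutive blocks of indices of lengths $n_1, \ldots, n_m$, so $\dot w$ can be taken block-diagonal, $\dot w = \dot w_1 \cdots \dot w_m$, with each $\dot w_i$ supported on the $i$-th block. A direct matrix computation shows that conjugation by $\dot w_i$ permutes the diagonal entries $d_j, d_{j+1}, \ldots, d_{j+n_i-1}$ in the upper half of a typical $t\in\overline{T}$ cyclically when the $i$-th cycle is positive; when it is negative, the same cyclic permutation occurs but the closing step introduces an inversion, since the negative Weyl element flips a coordinate sign and, inside $\overline{T}$, the symplectic pairing identifies the $(-l)$-entry with the inverse of the $l$-entry.

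Next, write a generic element of $\overline{T}$ as $t = \mathrm{bd}(\diag(d_1,\ldots,d_n),\diag(d_1^{-1},\ldots,d_n^{-1}))$. Applying $\sigma=\varphi_q$ raises each $d_l$ to the $q$-th power; the subsequent conjugation by $\dot w$ then realises the cyclic permutation (with possible inversion) just described. The equation $t^{\sigma w} = t$ therefore decouples across cycle blocks. Setting $\lambda_i := d_j$ for the first index of the $i$-th block, the intra-block recursion yields $d_{j+k} = \lambda_i^{q^k}$ for $0 \le k \le n_i - 1$, while the closure of the cycle gives $\lambda_i^{q^{n_i}} = \lambda_i$ in the positive case and $\lambda_i^{q^{n_i}} = \lambda_i^{-1}$ in the negative case. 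Both read uniformly as $\lambda_i^{q^{n_i} - \varepsilon_i} = 1$, which is precisely the defining relation of $U$.

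Reassembling the blocks, and using that the lower $(-1,\ldots,-n)$-part of $t$ is automatically the inverse of the upper part by membership in $\overline{T}$, gives $\overline{T}_{\sigma w} \subseteq U$; the reverse containment is immediate because each element of $U$ manifestly satisfies the cycle-wise equations above. The step demanding the most care is the selection of the lift $\dot w$: its entries must be chosen so that the closing step of a negative cycle contributes exactly the inversion described, without spurious scalars arising from the Chevalley commutator relations. Once this bookkeeping is settled the remainder of the argument is the one-variable equation $\lambda_i^{q^{n_i} - \varepsilon_i} = 1$ in $k^*$, which has the stated solution set.
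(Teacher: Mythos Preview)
The paper does not supply its own proof of this lemma: it is quoted verbatim from \cite[Proposition~3.1]{buturl} and used as a black box. Your direct computation is exactly the natural argument and is correct in outline; it is presumably also what is done in the cited reference.

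One remark: your closing concern about choosing the lift $\dot w$ carefully ``so that the closing step of a negative cycle contributes exactly the inversion described, without spurious scalars'' is unnecessary. The map $\overline{T}_{\sigma w}$ is defined via the action of $w\in W=N_{\overline G}(\overline T)/\overline T$ on $\overline T$ by conjugation, and since $\overline T$ is abelian this action is independent of the choice of lift $\dot w$. Any monomial preimage of $w$ in $N_{\overline G}(\overline T)$ yields the same permutation-with-signs on the diagonal entries, so no delicate bookkeeping with Chevalley commutator constants is needed. Once you drop that worry, the argument is exactly as you wrote: the fixed-point equation $t^{\sigma w}=t$ decouples over the cycle blocks of $w$, giving the recursion $d_{j+k}=\lambda_i^{q^k}$ and the closure relation $\lambda_i^{q^{n_i}-\varepsilon_i 1}=1$.
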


\begin{Th}\label{Sp}
Let $\Go$ be $Sp_{2n}(k)$ and $\sigma =\varphi_q$, so  $$\Gs=O^{p'}(\Gs)=Sp_{2n}(q).$$ Let  $\overline{T}$ be the subgroup of all diagonal matrices in $\Go$.  Let $\So=\To^g$ be a maximal $\sigma$-stable torus and let  $w:=\pi(g^{\sigma}g^{-1})$ be of type $(n_1)(n_2) \ldots (n_s)(\overline{n_{s+1}})(\overline{n_{s+2}}) \ldots (\overline{n_m})$.  Then $\Ss$ is nondegenerate always,  except when:
\begin{enumerate}
    \item $q=2$, $s \ge 2$ and $n_1=n_2=1;$
    \item $q=2$ and $n_{r}=2$ for some $r \le s$;
    \item $q \in \{2,3\}$, $s\ge 1$ and $n_1=1.$
\end{enumerate}
 
\end{Th}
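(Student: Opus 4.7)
The approach parallels the proof of Theorem \ref{SL}. By Lemma \ref{butur}, the existence of a root of $\Go$ with respect to $\So$ that vanishes on $\Ss$ is equivalent to the existence of a root with respect to the fixed torus $\To$ that vanishes on $\To_{\sigma w}$. I therefore work with $\To$ as the diagonal subgroup $\{\mathrm{bd}(D,D^{-1}) : D = \diag(h_1,\ldots,h_n)\}$ and analyse which $C_n$-roots can annihilate $\To_{\sigma w}$. Using the standard Chevalley generators for $Sp_{2n}$ together with the identity \eqref{ract}, the roots act on $h \in \To$ by
\begin{align*}
  (a_i - a_j)(h) &= h_i h_j^{-1}, & (a_i+a_j)(h) &= h_i h_j, & (2a_i)(h) &= h_i^2,
\end{align*}
together with their negatives.

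By Lemma \ref{but2}, the entries $h_1, \ldots, h_n$ are grouped into $m$ blocks indexed by the cycles of $w$; within block $r$, of length $n_r$, the entries are $\lambda_r^{q^0}, \lambda_r^{q^1}, \ldots, \lambda_r^{q^{n_r-1}}$, where $\lambda_r$ ranges independently over the $(q^{n_r}-\varepsilon_r)$-th roots of unity, with $\varepsilon_r = +1$ for positive cycles and $-1$ for negative ones. The plan is to examine each type of root in turn and determine exactly when all admissible choices of the $\lambda_r$'s give root value $1$.

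For $\alpha = \pm 2a_i$, writing $h_i = \lambda_r^{q^j}$, the condition $h_i^2 = 1$ for every admissible $\lambda_r$ becomes $q^{n_r} - \varepsilon_r \mid 2q^j$; coprimality of $q$ with $q^{n_r} - \varepsilon_r$ reduces this to $q^{n_r} - \varepsilon_r \mid 2$, which forces a positive cycle of length $1$ with $q \in \{2,3\}$ (case (3)), while negative cycles are excluded. For $\alpha = \pm(a_i - a_j)$ in the same block $r$, the analysis yields $q^{n_r}-\varepsilon_r \mid q^{|a-b|}-1$ with $0 < |a-b| < n_r$, which is numerically impossible. For $\alpha = \pm(a_i+a_j)$ in the same block the analogous condition is $q^{n_r}-\varepsilon_r \mid q^{|a-b|}+1$: for negative cycles this is again impossible by size, while for positive cycles the inequality $q^{n_r}-1 \le q^{n_r-1}+1$ forces $q^{n_r-1}(q-1) \le 2$, giving $q=2$, $n_r=2$, $|a-b|=1$, and indeed $3 \mid 3$; this yields case (2). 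Finally, if $i$ and $j$ lie in different blocks then, for any of $\alpha = \pm(a_i \pm a_j)$, the independence of the blocks forces both entries to be constantly equal to $1$, i.e.\ each of the two blocks must satisfy $q^{n_r} - \varepsilon_r = 1$; the only possibility is $q=2$ with two positive cycles of length $1$, yielding case (1).

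The main obstacle is the careful bookkeeping of the divisibility conditions for $a_i \pm a_j$ within a single block, since the $+$ and $-$ cases behave quite differently for positive cycles: only the $+$ case admits an exceptional solution, and extracting the sharp inequality $q^{n_r-1}(q-1) \le 2$ is what pins down the $(q,n_r)=(2,2)$ case. Once this is done, the three families of exceptions are exactly those listed.
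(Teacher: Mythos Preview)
Your proof is correct and follows essentially the same approach as the paper: reduce via Lemma~\ref{butur} to the diagonal torus, compute the values $\alpha(h)$ for the three families of $C_n$-roots, and use the explicit description of $\To_{\sigma w}$ from Lemma~\ref{but2} to determine exactly when a root vanishes. The paper states the outcome of the divisibility analysis as ``routine'' without details, whereas you spell out the inequalities (in particular the sharp bound $q^{n_r-1}(q-1)\le 2$ isolating the $(q,n_r)=(2,2)$ case for $a_i+a_j$ within a block), but the structure of the argument is identical.
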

\begin{proof}
It is well known (\cite[Chapter 11]{carter}) that the $\To$-root elements are 
\begin{equation*}
x_{\alpha}(t)=
\begin{cases}
E + t(e_{ij}-e_{-j-i}),\\ 
E - t(e_{-i-j}-e_{ji}),\\ 
E + t(e_{i-j}-e_{j-i}),\\ 
E - t(e_{-ij}-e_{-ji}),\\ 
E + t(e_{i-i}),\\ 
E + t(e_{-ii}),\\ 
\end{cases}
\end{equation*}
where $i<j,$ $t \in k$.
 Let 
$h =\diag(h_1, \ldots, h_n,h_1^{-1}, \ldots, h_n^{-1} ) \in \To$. Using the equality $x_{\alpha}(t)^h=x_{\alpha}(\alpha(h)t)$, we obtain 
 \begin{equation*}
\alpha(h)=
\begin{cases}
(h_i^{-1}h_j)^{\pm 1}\\
(h_i h_j)^{\pm 1}\\
(h_i^2)^{\pm 1}.
\end{cases}
\end{equation*}
 Thus, there exists $\alpha \in \Sigma$ such that $\alpha(h) =1$ for all elements $h \in  \overline{T}_{\sigma w}$ if and only if one of the following holds:
 \begin{enumerate}
     \item[$(a)$] $h_i=h_j$ for all $h \in  \overline{T}_{\sigma w}$ for some $i \ne j$;
     \item[$(b)$] $h_i=(h_j)^{-1}$ for all $h \in  \overline{T}_{\sigma w}$ for some $i \ne j$;
     \item[$(c)$] $h_i^2=1$ for all $h \in  \overline{T}_{\sigma w}$ for some $i$.
 \end{enumerate}
   Using Lemma \ref{but2}, it is routine to establish the following.
   Option $(a)$ occurs  if and only if $q=2$, $s \ge 2$ and $n_1=n_2=1$; here we can take $x_{\alpha}(t)=E+t(e_{12}-e_{-2-1})$.
       Option $(b)$ occurs if and only if either $(a)$ occurs or $q=2$ and $n_r=2$ for some $r \le s$; here we can take $x_{\alpha}(t)=E+t(e_{j-(j+1)}-e_{(j+1)-j})$ with $j= 1+\sum_{i=1}^{r-1}n_i$.
    Similarly, $(c)$ occurs if and only if    $q \in \{2,3\}$, $s \ge 1$ and $n_1=1$; here we can take $x_{\alpha}(t)=E+t(e_{1-1})$. 
\end{proof}

\subsection{Types $B_l$ and $D_l$, odd characteristic.}

Let $k$ be of odd characteristic. Let $\Go = SO_{2n+1}(k, Q)$, where $$Q(x) = x^2_0 +x_1x_{-1} +\ldots +x_nx_{-n}, n \ge 1.$$ We enumerate rows and columns
of matrices in $\Go$ in the order $0, 1, 2, \ldots , n, -1, -2,\ldots, -n.$ Define $\overline{H}$ to be a subgroup of $\Go$ consisting of
all matrices of the form bd$(1, A)$, where $A$ is a $(2n \times 2n)$-matrix, so 
$\overline{H} \cong	 SO_{2n}(k)$. Note that $\Go$ is the adjoint simple algebraic group with $\Sigma=B_{n}$ and $\overline{H}$ is a simple algebraic group with $\Sigma=D_{n}$ that is neither adjoint nor universal.  A subgroup $\To$
of $\Go$ consisting of all diagonal matrices of the form bd$(1, D, D^{-1})$ is a maximal torus of both  $\Go$
and $\overline{H}$. Groups $W = N_{\Go}(\To)/\To$ and $Sl_n$ are isomorphic; the subgroup $W_{\overline{H}} = N_{\overline{H}}(\To)/\To$ of the former is
isomorphic to a subgroup of the latter consisting of all permutations whose decomposition into disjoint
cycles contains an even number of negative cycles. Let $n_0 = \text{bd}(-1, A_0)$, where $A_0$ is the permutation
matrix corresponding to the negative cycle $(n, -n)$. Then $n_0 \in N_{\Go}(\To)$ and $W = W_{\overline{H}} \cup w_0W_{\overline{H}}$, where
$w_0 = \pi(n_0)$. The only nontrivial element of $\mathrm{Aut}(\Pi_{\Ho})$ is $ \rho = w_0$ and $\mathrm{Aut}(\Pi_{\Go})$ is trivial.

 If $\sigma =\varphi_{q}$, then $\Gs=SO_{2n+1}(q)$ and $\Hs = SO^+_{2n}(q).$ If
 $\sigma= \varphi_{q} \gamma_{\rho},$ where $\gamma_{\rho}=\mathrm{Int}(n_0)$ is the conjugation map, then $\Hss=SO^-_{2n}(q)$. The map $\varphi_{q}$ acts trivially on $W$, so $\varphi_{q}$-conjugacy classes and ordinary ones coincide. These classes are determined by the type of element in $Sl_n$(decomposition on independent positive and negative cycles). This is also true for $W_{\Ho}$ except in the case where all cycles are positive of even length, in this case there are two conjugacy classes, but we still refer to them as classes with a given cycle type. Elements $w_1$ and $w_2$ are $(\varphi_{q} \gamma_{\rho})$-conjugate in $W_{\Ho}$ if and only if $w_0w_1$ and $w_0w_2$ are conjugate by an element of $W_{\Ho}.$ For convenience, we say that a maximal torus $(\overline{T}^g)_{\varphi_{q} \gamma_{\rho}}$ of $\Ho_{\varphi_{q} \gamma_{\rho}}$ with $\pi(g^{\sigma}g^{-1})=w$ corresponds to the element $w_0w$ instead of $w.$ Let $\varepsilon$ be $+$ or $-$ when $\sigma$ is $\varphi_q$ or $\varphi_q \gamma_{\rho}$ respectively.

\begin{Lem}\label{butur3}{\rm \cite[Proposition 4.1]{buturl}}
Let $w$ be the standard representative of type $$(n_1)(n_2) \ldots (n_s)(\overline{n_{s+1}})(\overline{n_{s+2}}) \ldots (\overline{n_m})$$ and let $\sigma =\varphi_q$. Let $\varepsilon_i=+$ if $i \le s$ and $\varepsilon_i=-$ otherwise. Let $U$ be a subgroup of $SO_{2n+1}(k)$ consisting of all block-diagonal matrices of the form
$$\mathrm{bd}(1,D_1, \ldots, D_m, D_1^{-1}, \ldots, D_m^{-1}),$$
where $D_i=\diag(\lambda_i,\lambda_i^q, \ldots, \lambda_i^{q^{n_i-1}})$ and $\lambda_i^{q^{n_i}-\varepsilon_i1}=1.$ Then $\overline{T}_{\sigma w}=U.$
\end{Lem}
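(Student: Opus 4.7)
The plan is to compute $\overline{T}_{\sigma w}$ directly from the fixed-point equation $\sigma w(t) = t$ for $t \in \overline{T}$, following the template of Lemmas \ref{but1} and \ref{but2}. A key preliminary observation is that the anisotropic $1\times 1$ block (coming from $x_0^2$ in $Q$) is invariant under both $\sigma$ and every representative of $w \in W$: indeed, $\sigma=\varphi_q$ fixes the entry $1$, and the Weyl group of $SO_{2n+1}(k)$ acts only on the last $2n$ coordinates. Hence the leading ``$1$'' in the block-diagonal form of $U$ is automatic, and the real work concerns only the $2n \times 2n$ part, which is strictly parallel to the symplectic computation of Lemma~\ref{but2}.

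Concretely, I would parametrise $t \in \overline{T}$ as $t = \mathrm{bd}(1, t_1, \ldots, t_n, t_1^{-1}, \ldots, t_n^{-1})$. Under $\sigma = \varphi_q$ every diagonal entry is raised to the $q$-th power. The standard representative $w$ of the given cycle type acts on the coordinates block by block: on the block of indices $P_i = \{p_i, p_i+1, \ldots, p_i+n_i-1\}$ with $p_i = 1 + \sum_{j<i} n_j$, it performs a cyclic shift in the positive case and a cyclic shift with a sign-flip at the wrap-around in the negative case. Thus $\sigma w(t) = t$ decouples into independent systems of equations, one per cycle.

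Solving the $i$-th block: the shift conditions force $t_{p_i + k} = \lambda_i^{q^k}$ for $k = 0, \ldots, n_i-1$, where $\lambda_i := t_{p_i}$, and the wrap-around equation yields $\lambda_i^{q^{n_i}} = \lambda_i^{\varepsilon_i 1}$, equivalently $\lambda_i^{q^{n_i}-\varepsilon_i 1} = 1$. Assembling all blocks gives precisely the description of $U$, and the reverse inclusion $U \subseteq \overline{T}_{\sigma w}$ is immediate from direct verification.

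The only mildly delicate point is to fix the convention for the conjugation action of $w$ on $\overline{T}$ carefully --- in particular, to check that the sign-flip in a negative cycle can be concentrated at the wrap-around step rather than distributed along the cycle, and to keep the placement of the $q$-powers consistent with the direction of $w^{-1}$ induced by conjugation by permutation matrices. Once that bookkeeping is done (exactly as in the symplectic case of Lemma \ref{but2}), no further obstacle arises; the proof is essentially a transcription of the argument there, with the harmless leading~$1$ accounting for the odd-dimensional anisotropic direction.
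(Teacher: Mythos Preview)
The paper does not prove this lemma; it is quoted verbatim from \cite[Proposition~4.1]{buturl} without argument. Your sketch is the standard direct computation of $\overline{T}_{\sigma w}$ from the fixed-point equation, exactly parallel to the symplectic case in Lemma~\ref{but2} (also imported from \cite{buturl}), and it is correct. There is nothing to compare against in the present paper.
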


Since $\Go$ is not universal, $S \ne \So_{\sigma}$ for a maximal torus $\So$. Hence, we need the following lemma.

\begin{Lem}\label{buturTOm}{\rm \cite[Proposition 4.3]{buturl}}
Let $\lambda_i$ be an element of $k^*$ of order ${(q^{n_i}-\varepsilon_i1)}$. Let $u_i$ be 
$$\mathrm{bd}(1,D_1, \ldots, D_m, D_1^{-1}, \ldots, D_m^{-1})$$ where $D_i=\diag(\lambda_i,\lambda_i^q, \ldots, \lambda_i^{q^{n_i-1}})$ and $D_j$ is the identity matrix of size $n_j$ for $j \ne i.$ If $U$ is as in Lemma $\ref{butur3}$, then 
$$U^g \cap O^{p'}(\Go_{\sigma})=\{u_1^{k_1} \ldots u_m^{k_m} \mid \sum_{i=1}^m k_i \text{ is even }\}^g.$$
\end{Lem}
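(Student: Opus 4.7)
The plan is to exploit the identification $O^{p'}(\Go_\sigma) = \Omega_{2n+1}(q)$, which is the kernel of the spinor norm $\theta : SO_{2n+1}(q) \to F_q^\times/(F_q^\times)^2 \cong \{\pm 1\}$ and has index $2$ in $\Go_\sigma$. Since $\theta$ is conjugation-invariant, we have $U^g \cap O^{p'}(\Go_\sigma) = (U \cap \ker \theta)^g$, so the problem reduces to computing $\ker(\theta|_U)$.

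By Lemma~\ref{butur3}, $U$ decomposes as a direct product $\prod_{i=1}^m C_{q^{n_i} - \varepsilon_i 1}$, and I would first observe that $u_i$ generates the $i$-th cyclic factor: by construction $u_i$ corresponds to plugging a primitive $\lambda_i$ into the $i$-th block and the identity into every other block. Hence every element of $U$ has a unique representation $u_1^{k_1} \cdots u_m^{k_m}$, and multiplicativity of $\theta$ gives $\theta(u_1^{k_1} \cdots u_m^{k_m}) = \prod_i \theta(u_i)^{k_i}$.

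The main step, and the main obstacle, is to verify $\theta(u_i) = -1$ for each $i$. Because $u_i$ fixes the $0$-coordinate and acts as the identity on every block other than the $i$-th, $\theta(u_i)$ equals the spinor norm of the restriction of $u_i$ to the $2n_i$-dimensional hyperbolic subspace indexed by the $i$-th block. Identifying that subspace with a rank-one module over $F_{q^{n_i}}$ (or its quadratic twist when $\varepsilon_i = -$) equipped with an appropriate trace form, the element $u_i$ acts as multiplication by $\lambda_i$. A standard Wall-type calculation (using the Cayley parameter of $u_i$, or equivalently writing the restriction as a product of $2n_i$ reflections) then gives $\theta(u_i) \equiv N_{F_{q^{n_i}}/F_q}(\lambda_i)$ modulo squares (with the analogous twisted norm when $\varepsilon_i = -$). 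Since $\lambda_i$ has exact order $q^{n_i} - \varepsilon_i 1$, its norm generates $F_q^\times$ and so is a non-square, yielding $\theta(u_i) = -1$. Combining with the previous paragraph gives $\theta(u_1^{k_1} \cdots u_m^{k_m}) = (-1)^{\sum k_i}$, which equals $1$ in $\{\pm 1\}$ if and only if $\sum k_i$ is even; conjugating by $g$ then yields the claimed equality.
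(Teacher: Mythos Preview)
The paper does not prove this lemma at all: it is quoted verbatim from \cite[Proposition~4.3]{buturl} and used as a black box. So there is no ``paper's own proof'' to compare against; I can only assess your argument on its merits.

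Your overall strategy is the right one and is essentially how such results are proved: identify $O^{p'}(\Go_\sigma)=\Omega_{2n+1}(q)=\ker\theta$, observe that $U=\langle u_1\rangle\times\cdots\times\langle u_m\rangle$, and reduce everything to showing each $u_i^g$ has nontrivial spinor norm. Two points deserve tightening.

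First, a notational slip: you write $\theta(u_i)$, but $u_i\notin SO_{2n+1}(q)$ (its entries lie in $F_{q^{n_i}}$ or $F_{q^{2n_i}}$), so the $F_q$-spinor norm is not defined on it directly. What you should compute is $\theta(u_i^g)$ for $u_i^g\in SO_{2n+1}(q)$; the Zassenhaus/Wall formulas depend only on eigenvalues, so this is computable without knowing $g$ explicitly, and your conjugation-invariance remark should be read this way.

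Second, and more substantively, the case $\varepsilon_i=-$ is not handled by your phrase ``the analogous twisted norm''. For $\varepsilon_i=+$ your claim $\theta(u_i^g)\equiv N_{F_{q^{n_i}}/F_q}(\lambda_i)$ is correct and the conclusion follows since $\lambda_i$ generates $F_{q^{n_i}}^\times$. But for $\varepsilon_i=-$ the obvious candidate $N_{F_{q^{2n_i}}/F_q}(\lambda_i)$ is identically~$1$ (because $\lambda_i^{q^{n_i}+1}=1$), so that cannot be the answer. One correct route: with $D:=\det\bigl((1+u_i^g)/2\bigr)$ restricted to the moving subspace, a short computation using $\lambda_i^{q^{n_i}}=\lambda_i^{-1}$ gives $D^{(q-1)/2}=\lambda_i^{-(q^{n_i}+1)/2}=-1$, so $D$ is a non-square in $F_q^\times$ and $\theta(u_i^g)=-1$ as required. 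Your sketch gestures at this but does not supply it; as written, the minus-type case is a genuine gap.
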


\begin{Th}\label{O2n1n}
Let $\Go$ be $SO_{2n+1}(k)$, $q$  odd, and $\sigma =\varphi_q$, so  $\Gs=SO_{2n+1}(q)$. Let  $\overline{T}$ be the subgroup of all diagonal matrices in $\Go$.  If $\So=\To^g$ is a maximal $\sigma$-stable torus with $w:=\pi(g^{\sigma}g^{-1})$ of type $(n_1)(n_2) \ldots (n_s)(\overline{n_{s+1}})(\overline{n_{s+2}}) \ldots (\overline{n_m})$, then $S:=\So_{\sigma} \cap O^{p'}(\Go_{\sigma})$ is nondegenerate always, except when:
\begin{enumerate}
    \item $q=3$, $w$ is of type $(1)(1)$ (so $n=2$);
    \item $q=3$, $w$ is of type $(2)$ (so $n=2$);
    \item $q=3$, $w$ is of type $(1)$ (so $n=1$).
\end{enumerate}
\end{Th}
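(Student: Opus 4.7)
The plan is to follow the pattern of the proofs of Theorems \ref{SL} and \ref{Sp}, adapted to two new features of $B_n$: the presence of short roots $\pm a_i$ in $\Sigma$, and the index-$2$ constraint on $S$ arising from Lemma \ref{buturTOm}. First, I would record the $\To$-root elements $x_{\alpha}(t)$ for $\Go = SO_{2n+1}(k)$: for the long roots $\pm a_i \pm a_j$ these mirror the symplectic case of Theorem \ref{Sp}, while the short roots $\pm a_i$ contribute additional unipotent generators acting nontrivially on the $0$-th row and column. Evaluating on $h = \diag(1, h_1, \ldots, h_n, h_1^{-1}, \ldots, h_n^{-1}) \in \To$ and using $x_{\alpha}(t)^h = x_{\alpha}(\alpha(h)t)$ from \eqref{ract}, one obtains $\alpha(h) \in \{h_i^{\pm 1} h_j^{\pm 1}\}$ for the long roots and $\alpha(h) \in \{h_i^{\pm 1}\}$ for the short roots. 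Hence by Lemma \ref{butur}, $S$ is degenerate if and only if one of the following holds for every $h \in \To_{\sigma w} \cap O^{p'}(\Go_{\sigma})$: $(a)$ $h_i = h_j$ for some fixed $i \neq j$; $(b)$ $h_i h_j = 1$ for some fixed $i \neq j$; or $(c)$ $h_i = 1$ for some fixed $i$. Condition $(c)$ is the new ingredient coming from the short roots and has no counterpart in Theorem \ref{Sp}.

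Next, I would apply Lemmas \ref{butur3} and \ref{buturTOm} to describe $\To_{\sigma w} \cap O^{p'}(\Go_{\sigma})$ as $\{u_1^{k_1} \cdots u_m^{k_m} \mid k_1 + \cdots + k_m \text{ is even}\}$, where $u_r$ is supported on the $r$-th diagonal block via a generator $\lambda_r$ of order $q^{n_r} - \varepsilon_r$. Since $q$ is odd, this order is always even, so the parity constraint is genuine. For $m \geq 2$, any individual $k_r$ still ranges over all integers, by compensating with some $k_{r'}$, $r' \neq r$; for $m = 1$, $k_1$ is forced to be even and $\lambda_1^{k_1}$ ranges only over $\langle \lambda_1^2 \rangle$, which has index $2$ in $\langle \lambda_1 \rangle$.

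Third, I would translate conditions $(a)$, $(b)$, $(c)$ into divisibility requirements of the form $q^{n_r} - \varepsilon_r \mid c$ (when $m \geq 2$) or $q^{n_r} - \varepsilon_r \mid 2c$ (when $m = 1$), where $c$ is a short $\mathbb{Z}$-linear combination of powers of $q$ with $|c| < q^{n_r}$. Using $\gcd(q^{n_r} - \varepsilon_r, q) = 1$ together with elementary size estimates, almost every such possibility is eliminated. A short enumeration yields exactly three surviving families: condition $(c)$ with $m = 1$ forces $q^{n_1} - \varepsilon_1 \mid 2$, hence $q = 3$, $n_1 = 1$, $\varepsilon_1 = +$, giving case $(3)$; condition $(b)$ within a single block with $m = 1$ forces $q^{n_1} - 1 \mid 2(q^{a-b}+1)$, whose only odd-$q$ solution is $q = 3$, $n_1 = 2$, giving case $(2)$; and cross-block conditions $(a)$ or $(b)$ with $m = 2$ force $q^{n_r} - \varepsilon_r \mid 2$ for both blocks, hence both are positive of length $1$ with $q = 3$, and the parity constraint $k_1 + k_2 \in 2\mathbb{Z}$ then makes $h_1 = h_2$ and $h_1 h_2 = 1$ automatic, giving case $(1)$. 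For $m \geq 3$, or for other block shapes not already treated, a suitable choice of $(k_1, \ldots, k_m)$ with even sum breaks the putative identity, so no further degenerate tori appear.

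The main obstacle is the parity constraint from Lemma \ref{buturTOm}. It is precisely what distinguishes this analysis from the symplectic case of Theorem \ref{Sp} and is responsible for the exceptions occurring only at $q = 3$ rather than at $q \in \{2, 3\}$. Handling it correctly requires a careful separation of $m = 1$ from $m \geq 2$ and disciplined bookkeeping of the factor of $2$ that appears in the divisibility bounds on $q^{n_r} - \varepsilon_r$.
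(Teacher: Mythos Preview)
Your proposal is correct and follows essentially the same approach as the paper: list the $\To$-root elements for $B_n$, derive the three vanishing conditions $(a)$, $(b)$, $(c)$ on the diagonal entries, and then use Lemma~\ref{buturTOm} to reduce to divisibility constraints of the form $q^{n_r}-\varepsilon_r \mid c$ or $q^{n_r}-\varepsilon_r \mid 2c$. The paper compresses the final case analysis into a single ``it is routine'' paragraph, whereas you spell out the $m=1$ versus $m\ge 2$ dichotomy and the role of the parity constraint more explicitly; apart from a couple of harmless slips (writing $q^{n_1}-1$ instead of $q^{n_1}-\varepsilon_1$, and $q^{a-b}$ for $q^{b-a}$), the argument is the same.
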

\begin{proof}
Recall that $S= (\To^g)_{\sigma} \cap O^{p'}(\Go_{\sigma})=U^g \cap O^{p'}(\Go_{\sigma})$, so $S^{g^{-1}}$ is described in Lemma \ref{buturTOm}. It is well known (\cite[Chapter 11]{carter}) that the $\To$-root elements are 
\begin{equation*}
x_{\alpha}(t)=
\begin{cases}
E + t(e_{ij}-e_{-j-i}),\\ 
E - t(e_{-i-j}-e_{ji}),\\ 
E + t(e_{i-j}-e_{j-i}),\\ 
E - t(e_{-ij}-e_{-ji}),\\ 
E+t(2e_{i0}-e_{0-i})-t^2(e_{i,-i}),\\
E-t(2e_{-i0}-e_{0i})-t^2(e_{-i,i}),
\end{cases}
\end{equation*}
where $i<j,$ $t \in k$, $E$ is an identity matrix, $e_{ij}$ is the matrix with 1 on place $(i,j)$ and 0 everywhere else. 
 Let $$h =\diag(1,h_1, \ldots, h_n,h_1^{-1}, \ldots, h_n^{-1} ) \in \To.$$ Using the equality $x_{\alpha}(t)^h=x_{\alpha}(\alpha(h)t)$, we obtain 
 \begin{equation*}
\alpha(h)=
\begin{cases}
(h_i^{-1}h_j)^{\pm 1}\\
(h_ih_j)^{\pm 1}\\
h_i^{\pm 1}.
\end{cases}
\end{equation*}
Thus, there exists $\alpha \in \Sigma$ such that $\alpha(h) =1$ for all elements $h \in  S^{(g^{-1})}$ if and only if one of the following holds:
 \begin{enumerate}
     \item[$(a)$] $h_i=h_j$ for all $h \in  \overline{T}_{\sigma w}$ for some $i \ne j$;
     \item[$(b)$] $h_i=(h_j)^{-1}$ for all $h \in  \overline{T}_{\sigma w}$ for some $i \ne j$;
     \item[$(c)$] $h_i=1$ for all $h \in  \overline{T}_{\sigma w}$ for some $i$.
 \end{enumerate}
    By Lemma \ref{buturTOm} and since $q$ is odd, option $(a)$ occurs if and only if $q=3$, $m=2$  and $w$ is of type $(1)(1)$; option  $(b)$ occurs if and only if either $q=3$, $m=2$  and $w$ is of type $(1)(1)$ or $q=3$, $m=1$ and $w$ is of type $(2)$.   Similarly, option $(c)$ occurs if and only if $q=3$, $m=1$ and $w$ is of type $(1)$. \end{proof}

\begin{Th}\label{O2nn}
Let $\Ho$ be $SO_{2n}(k)$,  $\sigma \in \{\varphi_q, \varphi_q \gamma_{\rho}\}$, so  $\Hs=SO_{2n}^{\varepsilon}(q)$. Let  $\overline{T}$ be the subgroup of all diagonal matrices in $\Ho$.   If $\So=\To^g$ is a maximal $\sigma$-stable torus, then $S:=\So_{\sigma} \cap O^{p'}(\Go_{\sigma})$ is nondegenerate always, except in the following cases for $\varepsilon=+$:
\begin{enumerate}
    \item $q=3$, $w$  is of type $(1)(1)$ (so $n=2$);
    \item $q=3$, $w$  is of type $(2)$ (so $n=2$).
\end{enumerate}
\end{Th}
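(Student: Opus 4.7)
The plan is to follow the template of Theorem \ref{O2n1n} closely, exploiting two structural features: the $D_n$ root system has no short roots (no roots of the form $\pm a_i$), and $\Ho \le \Go = SO_{2n+1}(k)$ shares the diagonal torus $\To$ and its root subgroups for the $D_n$ subroot system. Consequently, the first four families of root elements listed in the proof of Theorem \ref{O2n1n} are already the full list of $\To$-root elements of $\Ho$. Writing $h = \diag(1, h_1, \ldots, h_n, h_1^{-1}, \ldots, h_n^{-1}) \in \To$, I would obtain $\alpha(h) \in \{(h_i^{-1} h_j)^{\pm 1}, (h_i h_j)^{\pm 1}\}$, so the existence of $\alpha$ with $\alpha|_{S^{g^{-1}}} \equiv 1$ reduces to the two conditions
\begin{enumerate}
\item[$(a)$] $h_i = h_j$ for all $h \in S^{g^{-1}}$ for some $i \ne j$, or
\item[$(b)$] $h_i = h_j^{-1}$ for all such $h$ and some $i \ne j$.
\end{enumerate}
Option $(c)$ of the proof of Theorem \ref{O2n1n}, which relied on a short root, does not occur here.

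For $\varepsilon = +$, where $\sigma = \varphi_q$ and $w \in W_{\Ho}$ has an even number of negative cycles, the structure of $\To_{\sigma w}$ is given by the $D_n$-analog of Lemma \ref{butur3} (the same formula with the leading entry suppressed), and $S^{g^{-1}} = \To_{\sigma w} \cap O^{p'}(\Ho_\sigma)$ is described by the $D_n$-analog of Lemma \ref{buturTOm} as an index-two subgroup determined by a parity constraint on the cycle exponents. The case analysis of $(a)$ and $(b)$ then reproduces the corresponding computations of Theorem \ref{O2n1n}: option $(a)$ forces $q=3$ and $w$ of type $(1)(1)$ (hence $n=2$), with witnessing root element $x_\alpha(t) = E + t(e_{12} - e_{-2-1})$; option $(b)$ additionally yields $q=3$ and $w$ of type $(2)$ (hence $n=2$), with witness $x_\alpha(t) = E + t(e_{1-2} - e_{2-1})$ (and indeed $h_1 h_2 = \lambda^{q+1} = \lambda^4$, which vanishes on the index-two subgroup $\{u_1^{2j}\}$ since $\lambda^8 = 1$). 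The parity constraint rules out would-be exceptions with extra cycles such as $(1)(1)(1)$, where the element with $(k_1,k_2,k_3)=(1,0,1)$ gives $h_1 \ne h_2$.

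For $\varepsilon = -$, where $\sigma = \varphi_q \gamma_\rho$, the convention preceding Lemma \ref{butur3} identifies $w$ (in the labelling) with $w_0 \pi(g^\sigma g^{-1})$, which has an odd number of negative cycles. Since $\gamma_\rho$ acts on $\To$ as conjugation by $w_0$, the torus $\To_{\sigma w}$ has the same structure as $\To_{\varphi_q(w_0 w)}$, so Lemma \ref{butur3} applies with at least one block $r$ carrying the twisted relation $\lambda_r^{q^{n_r}+1} = 1$. I would run through the finitely many small cases in which $(a)$ or $(b)$ might potentially force a collapse, namely $n=2$ of types $(\bar 2)$ and $(1)(\bar 1)$, and the $n=3$ configurations with an odd number of negative cycles. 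In each, the sizes $q^{n_r}+1 \ge 4$ of the twisted cyclic factors, together with the independence of mixed twisted and untwisted blocks, prevent any uniform relation $h_i = h_j^{\pm 1}$ from holding on $S^{g^{-1}}$ (for instance for type $(\bar 2)$ one needs $\lambda^{4j} = 1$ or $\lambda^{8j} = 1$ for all $j$, which fails since $\lambda$ has order $q^2+1 \ge 10$). The main obstacle is precisely this twisted-case verification: one must confirm that the index-two intersection with $O^{p'}(\Ho_\sigma)$ does not create an unexpected collapse, and handle the low-rank cases $n=2,3$ individually (including the exceptional isogenies $D_2 \cong A_1 \times A_1$ and $D_3 \cong A_3$).
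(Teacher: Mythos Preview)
Your setup and the $\varepsilon=+$ analysis match the paper's proof: the $D_n$ root elements are the first four families from the $B_n$ list, $\alpha(h)\in\{(h_i^{-1}h_j)^{\pm1},(h_ih_j)^{\pm1}\}$, and the case analysis from Theorem~\ref{O2n1n} applied to the index-two subgroup of Lemma~\ref{buturTOm} yields exactly the types $(1)(1)$ and $(2)$ at $q=3$. One point to note: the statement defines $S=\So_\sigma\cap O^{p'}(\Go_\sigma)$ with $\Go=SO_{2n+1}(k)$, and the paper actually invokes Lemmas~\ref{butur3} and~\ref{buturTOm} directly (not a ``$D_n$-analog''), after checking that $\So_\sigma\cap O^{p'}(\Ho_\sigma)=\So_\sigma\cap O^{p'}(\Go_\sigma)$ via the common index~$2$ in $\So_\sigma$.

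Where you diverge is the $\varepsilon=-$ endgame. The paper does not recompute twisted block orders case by case. Instead it first justifies the parity description of $S^{g^{-1}}$ in the twisted case by an explicit conjugation inside $\Go$: choosing $g_0\in\Go$ with $g_0^{\varphi_q}g_0^{-1}=n_0$ identifies $(\So_\sigma)^{g_0}$ with a $\varphi_q$-stable torus of $\Go_{\varphi_q}$ corresponding to $w_0w$, so Lemma~\ref{buturTOm} applies verbatim. Then the \emph{same} analysis as in Theorem~\ref{O2n1n} shows that a vanishing root forces $w_0w$ to be of type $(1)(1)$ or $(2)$; but these have an even number of negative cycles and hence lie in $W_{\Ho}$, not in the coset $w_0W_{\Ho}$, so they are impossible for $\varepsilon=-$. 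This single parity observation replaces your proposed case checks for $(\bar 2)$, $(1)(\bar 1)$, and the $n=3$ configurations, and also explains without further work why no larger $n$ needs to be examined --- a point your sketch leaves unargued. Your direct computations are not wrong, but you should supply the conjugation-by-$g_0$ step (or an equivalent) before asserting that the Lemma~\ref{buturTOm} description holds for $\sigma=\varphi_q\gamma_\rho$, and then the parity argument makes the rest of your twisted-case verification unnecessary.
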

\begin{proof}
Let $\varepsilon=+$, so $\sigma= \varphi_q$. Hence, $$S= (\To^g)_{\sigma} \cap O^{p'}(\Ho_{\sigma})=(\To^g)_{\sigma} \cap O^{p'}(\Go_{\sigma})=U^g \cap O^{p'}(\Go_{\sigma}),$$ so $S^{g^{-1}}$ is described in Lemma \ref{buturTOm}. The second equation holds since $(\To^g)_{\sigma} \cap O^{p'}(\Ho_{\sigma})\le (\To^g)_{\sigma} \cap O^{p'}(\Go_{\sigma})$ and both groups have index $2$ in $(\To^g)_{\sigma}.$

Let $\varepsilon=-$, and recall that $\sigma=\varphi_{q} \circ \text{Int}(n_0)$. Let $g_0 \in \Go$ be such that $g_0^{\varphi_q}g_0^{-1}=n_0$ and let $w_0w$ be of type $$(n_1)(n_2) \ldots (n_s)(\overline{n_{s+1}})(\overline{n_{s+2}}) \ldots (\overline{n_m}).$$ Note that $G_{\sigma}^{g_0}=\Go_{\varphi_q}.$ Using Lemma \ref{butur}, we obtain that $(\So_{\sigma})^{g_0}=(\To_{\varphi_q w_0w})^{gg_0}$ is a maximal torus of $\Go_{\varphi_q}$  corresponding to $w_0w$ with respect to $\To$. Therefore,
$$\So_{\sigma} \cap O^{p'}(\Go_{\sigma})=(\So_{\sigma}^{g_0} \cap O^{p'}(\Go_{\varphi_q}))^{g_0^{-1}}=((\To_{\varphi_q w_0w})^{gg_0} \cap O^{p'}(\Go_{\varphi_q}))^{g_0^{-1}}$$
and, by Lemma \ref{buturTOm}, 
$$\So_{\sigma} \cap O^{p'}(\Go_{\sigma})=\{u_1^{k_1} \ldots u_m^{k_m} \mid \sum_{i=1}^m k_i \text{ is even }\}^g,$$ where the $u_i$ are taken as in Lemma \ref{buturTOm} according to the type of $w_0w.$ Since $\So_{\sigma} \cap O^{p'}(\Ho_{\sigma})\le \So_{\sigma} \cap O^{p'}(\Go_{\sigma})$ and both groups have index $2$ in $\So_{\sigma},$ these groups are equal. Hence, in both cases, Lemma \ref{buturTOm} gives us a description of $S^{g^{-1}} \le \To$.

It is well known (\cite[Chapter 11]{carter}) that the $\To$-root elements are 
\begin{equation*}
x_{\alpha}(t)=
\begin{cases}
E + t(e_{ij}-e_{-j-i}),\\ 
E - t(e_{-i-j}-e_{ji}),\\ 
E + t(e_{i-j}-e_{j-i}),\\ 
E - t(e_{-ij}-e_{-ji}),\\ 
\end{cases}
\end{equation*}
where $i<j,$ $t \in k$, $E$ is an identity matrix, $e_{ij}$ is the matrix with 1 on place $(i,j)$ and 0 everywhere else.  Let 
$$h =\diag(h_1, \ldots, h_n,h_1^{-1}, \ldots, h_n^{-1} ) \in \To.$$ Using the equality $x_{\alpha}(t)^h=x_{\alpha}(\alpha(h)t)$, we obtain 
 \begin{equation*}
\alpha(h)=
\begin{cases}
(h_i^{-1}h_j)^{\pm 1}\\
(h_i h_j)^{\pm 1}.
\end{cases}
\end{equation*}
The same arguments as in the proof of Theorem \ref{O2n1n} show that there exists a vanishing on $S^{g^{-1}}$ root if and only if  $w$  $(w_0w \text{ for } \varepsilon=-)$ is either of type $(1)(1)$ or of type $(2).$ However, if $\varepsilon=-$, then $w_0w$ must lie in $w_0W_{\Ho}$ which is not the case.
\end{proof}

\subsection{Types $B_l$  and $D_l$, even characteristic.}

If $k = \overline{F_2}$, then the root systems $C_l$ and $B_l$ yield the same simple algebraic groups (see the proof of \cite[Theorem 11.3.2 (ii)]{carter}), so we refer the reader to Section \ref{SecSymp}. 

Let $\Sigma= D_l$. Let $\Ho$ be the commutator subgroup of $O_{2n}(k,Q),$ where 
$$Q(x) = x_1x_{-1} +\ldots +x_nx_{-n}, n\ge 1.$$ We enumerate rows and columns
of matrices in $O_{2n}(k)$ in the order ${1, 2, \ldots , n,} -1, -2,\ldots, -n.$ Then $\Ho = \Omega_{2n}(k,Q)$ is the kernel of the Dickson invariant, namely $$\Ho =\{x \in O_{2n}(q) \mid \mathrm{rank}(x-I) \text{ is even }\}.$$ The subgroup $\To$ of $O_{2n}(k)$ consisting of all diagonal matrices of the form
$\rm{bd}(1, D, D^{-1})$ lies in  $\Ho$, and $\To$ is a maximal torus of $\Ho$. The group $\Ho$ is a simple algebraic group corresponding to $\Sigma=B_n$. Moreover, $\Ho$ is adjoint since its centre is trivial. As before, the Weyl group  $W_{\overline{H}} = N_{\overline{H}}(\To)/\To$  is
isomorphic to a subgroup of $Sl_n$ consisting of all permutations whose decomposition into disjoint
cycles contains an even number of negative cycles. Let $n_0$ be the permutation
matrix corresponding to the negative cycle $(n, -n)$. It is routine to check that (as in the case of odd characteristic) the conjugation by $n_0$ induces a graph automorphism $\gamma_{\rho}$ on $\Ho$. If $\sigma= \varphi_q$, then $\Ho_{\sigma}=\Omega_{2n}^{+}(q);$ if $\sigma= \varphi_{q} \gamma_{\rho}$, then $\Ho_{\sigma}=\Omega_{2n}^{-}(q).$ If $n \ge 3$, then groups $\Omega_{2n}^{\pm}$ are simple. In particular $\Ho_{\sigma}=O^{p'}(\Ho_{\sigma})$ for $\sigma \in \{\varphi_q, \varphi_q \gamma_{\rho}\}.$  As before, for $\sigma =\varphi_q \gamma_{\rho}$ we say that a maximal torus $(\overline{T}^g)_{\sigma}$ of $\Ho_{\sigma}$ with $\pi(g^{\sigma}g^{-1})=w$ corresponds to  $w_0w \in w_0W_{\Ho}$ instead of $w.$

\begin{Th}\label{O2nne}
Let $\Ho$ be $\Omega_{2n}(k)$,  $\sigma \in \{\varphi_q, \varphi_q \gamma_{\rho}\}$, so  $\Hs=\Omega_{2n}^{\varepsilon}(q)$. Let  $\overline{T}$ be the subgroup of all diagonal matrices in $\Ho$.   If $\So=\To^g$ is a maximal $\sigma$-stable torus, then $\So_{\sigma}$ is nondegenerate always, except in the cases: 
\begin{enumerate}
    \item $q=2$, $w$ $($$w_0w$ for $\varepsilon =-$$)$ is of type $(n_1)(n_2) \ldots (n_s)(\overline{n_{s+1}})(\overline{n_{s+2}}) \ldots (\overline{n_m})$ with $s \ge 2$ and $n_1=n_2=1$;
    \item $q=2$, $w$ $($$w_0w$ for $\varepsilon =-$$)$ is of type $(n_1)(n_2) \ldots (n_s)(\overline{n_{s+1}})(\overline{n_{s+2}}) \ldots (\overline{n_m})$ with $n_{r}=2$ for some $r \le s$.
\end{enumerate}
\end{Th}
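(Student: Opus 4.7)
The proof plan closely mirrors that of Theorem \ref{O2nn} for the odd characteristic case, with two structural simplifications in even characteristic. Since $\Ho = \Omega_{2n}(k)$ is adjoint and $\Ho_{\sigma} = O^{p'}(\Ho_{\sigma})$, I may work directly with $\To_{\sigma w}$ rather than with its intersection with $O^{p'}(\Go_{\sigma})$, so the analogue of Lemma \ref{buturTOm} is not needed. Also, the root system is $D_n$ rather than $B_n$, so no ``long'' root elements exist and the condition $h_i = 1$ (condition $(c)$ from the proof of Theorem \ref{O2n1n}) cannot occur.

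First I would describe $(\To^g)_{\sigma}$ up to the usual conjugation. For $\varepsilon = +$ (so $\sigma = \varphi_q$), the computation of Lemma \ref{but2} carries over: the elements $\mathrm{bd}(D_1, \ldots, D_m, D_1^{-1}, \ldots, D_m^{-1})$ parametrised by the cycles of $w$ are products of $h_{\alpha}(\cdot)$'s and so lie in $\Ho$, giving the same description of $\To_{\sigma w}$ as in the symplectic case. For $\varepsilon = -$ (so $\sigma = \varphi_q \gamma_{\rho} = \varphi_q \circ \mathrm{Int}(n_0)$), I would pick $g_0$ with $g_0^{\varphi_q} g_0^{-1} = n_0$, as in the corresponding case of Theorem \ref{O2nn}, and apply Lemma \ref{butur} to obtain $(\So_{\sigma})^{g_0} = (\To_{\varphi_q w_0 w})^{g g_0}$, a maximal torus of $\Ho_{\varphi_q}$ whose coordinate description is again given by Lemma \ref{but2}, but now using the cycle type of $w_0 w$ rather than of $w$.

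Next I would identify the $\To$-root elements available for $D_n$ -- namely $E \pm t(e_{ij} - e_{-j-i})$ and $E \pm t(e_{i-j} - e_{j-i})$ for $i < j$ -- and, using $x_{\alpha}(t)^h = x_{\alpha}(\alpha(h) t)$ with $h = \diag(h_1, \ldots, h_n, h_1^{-1}, \ldots, h_n^{-1})$, obtain $\alpha(h) \in \{(h_i^{-1} h_j)^{\pm 1}, (h_i h_j)^{\pm 1}\}$. Hence a root vanishes on $\To_{\sigma w}$ iff for some $i \ne j$ and all $h$ in the torus either $(a)$ $h_i = h_j$ or $(b)$ $h_i h_j = 1$.

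Finally, I would translate $(a)$ and $(b)$ into conditions on the cycle structure, exploiting the fact that each cycle of length $n_r$ and sign $\varepsilon_r$ contributes an independent parameter $\lambda_r$ ranging over the $(q^{n_r}-\varepsilon_r 1)$-th roots of unity, with coordinates $\lambda_r, \lambda_r^q, \ldots, \lambda_r^{q^{n_r - 1}}$ at the positions in that cycle. When $i$ and $j$ lie in distinct cycles, both $(a)$ and $(b)$ force $\lambda_r$ and $\lambda_s$ to be identically $1$, which in even characteristic is possible only for $q = 2$ and two positive cycles of length $1$, giving exception $(1)$. When $i$ and $j$ lie in the same cycle at positions $p_1 < p_2$, condition $(a)$ demands $q^{n_r} - \varepsilon_r 1 \mid q^{p_2} - q^{p_1}$, which fails by a trivial size estimate; condition $(b)$ demands $q^{n_r} - \varepsilon_r 1 \mid q^{p_1} + q^{p_2}$, and a short case check on $(q, n_r, p_1, p_2, \varepsilon_r)$ shows this forces $q = 2$, $\varepsilon_r = +$, $n_r = 2$ (with necessarily $p_1 = 0, p_2 = 1$), yielding exception $(2)$. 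The main obstacle, and the substantive new point compared to Theorem \ref{O2nn}, is checking in the $\varepsilon = -$ case that the listed exception types for $w_0 w$ are actually attainable while $w \in W_{\Ho}$ -- equivalently, that $w_0 w$ can have an odd number of negative cycles consistent with the constraints on $n_1, \ldots, n_s$; this is straightforward here because the constraints leave $n_3, \ldots, n_m$ and the trailing negative cycles free to adjust the parity, which is why both $\varepsilon = \pm$ genuinely contribute exceptions, in contrast to the odd characteristic situation.
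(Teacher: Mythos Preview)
Your proposal is correct and follows essentially the same route as the paper: describe $\To_{\sigma w}$ via Lemma~\ref{but2}, list the $D_n$ root values $(h_i^{-1}h_j)^{\pm1}$ and $(h_ih_j)^{\pm1}$, and reduce to the two cycle conditions exactly as in Theorem~\ref{Sp}. The one presentational difference is in how $\To_{\sigma w}$ is obtained. You invoke the $g_0$--conjugation trick from the proof of Theorem~\ref{O2nn} to pass from $\sigma=\varphi_q\gamma_\rho$ to $\varphi_q$ with $w$ replaced by $w_0w$; the paper instead observes directly that $O_{2n}(k)\le Sp_{2n}(k)$ (the orthogonal form polarises to the symplectic one in characteristic~2), so $\To$ is literally the diagonal symplectic torus and Lemma~\ref{but2} applies verbatim, with $\To_{\sigma w}=\To_{\varphi_q w_0w}$ because $\gamma_\rho$ acts on $\To$ as $w_0$. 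This shortcut makes the $g_0$ argument unnecessary. Your closing remark on why both signs $\varepsilon$ contribute exceptions (in contrast to Theorem~\ref{O2nn}) is a helpful observation that the paper leaves implicit.
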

\begin{proof}
Note that elements of $O_{2n}(k)$ preserve the symplectic form 
$$x_1y_{-1}-x_{-1}y_{1} + \ldots + x_{n}y_{-n}-x_{-n}y_{n},$$
so $O_{2n}(k) \le Sp_{2n}(k).$ In particular, $\To$ is a maximal $\varphi_q$-stable torus of $Sp_{2n}(k)$  and Lemma \ref{but2} gives us the structure of $\To_{\sigma w}$ that is equal to $\To_{\varphi_q w}$ if $\varepsilon=+$ and to $\To_{\varphi_q w_0w}$ if $\varepsilon=-$.
It is well known (\cite[Chapter 11]{carter}) that the $\To$-root elements are 
\begin{equation*}
x_{\alpha}(t)=
\begin{cases}
E + t(e_{ij}-e_{-j-i}),\\ 
E - t(e_{-i-j}-e_{ji}),\\ 
E + t(e_{i-j}-e_{j-i}),\\ 
E - t(e_{-ij}-e_{-ji}),\\ 
\end{cases}
\end{equation*}
where $i<j,$ $t \in k$, $E$ is an identity matrix, $e_{ij}$ is the matrix with 1 on place $(i,j)$ and 0 everywhere else.  Let 
$$h =\diag(h_1, \ldots, h_n,h_1^{-1}, \ldots, h_n^{-1} ) \in \To.$$ Using the equality $x_{\alpha}(t)^h=x_{\alpha}(\alpha(h)t)$, we obtain 
 \begin{equation*}
\alpha(h)=
\begin{cases}
(h_i^{-1}h_j)^{\pm 1}\\
(h_i h_j)^{\pm 1}.
\end{cases}
\end{equation*}
The theorem follows by the same arguments as in the proof of Theorem \ref{Sp}. 
\end{proof}

\subsection{General statement.}

Now we prove the main result for classical groups. Recall that $k=\overline{F_p}$ and we consider a $\sigma$-setup $(\overline{G},\sigma)$ for a finite simple group $G$, so $G=O^{p'}(\Go_{\sigma})'$. In particular, we assume that $n$ is at least $2$, $3$, $4$, and $7$ for $G$ equal to  $PSL_n(q),$ $PSU_n(q),$ $PSp_{n}(q)'$ and $P\Omega_n^{\varepsilon}(q)$ respectively. Excluded groups are either not simple or isomorphic to other groups in the list above (see \cite[Proposition 2.9.1]{kleidlieb} for details). Note that $O^{p'}(\Go_{\sigma})'= O^{p'}(\Go_{\sigma})$ unless $O^{p'}(\Go_{\sigma})=PSp_4(2).$ Since $PSp_4(2) \cong \Sym(6)$, here $|G/O^{p'}(\Go_{\sigma})|=2.$ 

\begin{Th}\label{genclassic}
Let $(\Go, \sigma)$ be a $\sigma$-setup for a classical finite simple group $G$ of Lie type. Let $\To$ be the maximal torus consisting of the diagonal matrices in $\Go$ (modulo $Z(\Go)$, if $\Go$ is projective).  Let $\So=\To^g$ with $\pi(g^{\sigma}g^{-1}) = w$ and let $S$ be $\So_{\sigma} \cap G.$ The algebraic normaliser $N(G,S)$ is  {\bf \emph{not}} equal to $N_G(S)$ if and only if one of the following holds:
\begin{enumerate}
    \item $(\Go, \sigma, G) =(PSL_n(k), \varphi_q, PSL_n(q))$,  $w$ is of type $(n_1)(n_2) \ldots (n_m),$ and at least one of the following holds:
    \begin{enumerate}
    \item$q=2$ and $n_1=n_2=1;$
    \item $q=3$, $n=2$ and $n_1=n_2=1$;
\end{enumerate}
    \item $(\Go, \sigma, G) =(PSp_{2n}(k), \varphi_q, PSp_{2n}(q)')$, $w$ is of type $(n_1)(n_2) \ldots (n_s)(\overline{n_{s+1}})(\overline{n_{s+2}}) \ldots (\overline{n_m})$ and at least one of the following holds:
\begin{enumerate}
    \item$q=2$ and $n_1=n_2=1;$
    \item $q=2$ and $n_{r}=2$ for some $r \le s$;
    \item $q \in \{2,3\}$ and $n_1=1;$
\end{enumerate}
    \item $p=2$,  $\Go =\Omega_{2n}(k)$, $\sigma \in \{\varphi_q, \varphi_{q} \circ \rm{Int}(n_0)\}$, $G=\Omega_{2n}^{\varepsilon}(q)$,  $w$ $(${$w_0w$} for $\varepsilon =-$ $)$ is of type $(n_1)(n_2) \ldots (n_s)(\overline{n_{s+1}})(\overline{n_{s+2}}) \ldots (\overline{n_m})$ and at least one of the following holds: 
    \begin{enumerate}
    \item $q=2$,  $s \ge 2$ and $n_1=n_2=1$;
    \item $q=2$,  $n_{r}=2$ for some $r \le s$.
\end{enumerate}
\end{enumerate}
\end{Th}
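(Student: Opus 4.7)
The statement is essentially a translation and consolidation of Theorems \ref{SL}, \ref{Sp}, \ref{O2n1n}, \ref{O2nn} and \ref{O2nne} into the language of the simple groups $G$, so my plan is to combine Lemma \ref{nongeneq} with Lemma \ref{univ} to reduce the question for an arbitrary classical $\sigma$-setup to the concrete matrix computations already done. By Lemma \ref{nongeneq}, the condition $N(G,S) \ne N_G(S)$ can fail only when $S$ is degenerate, so it suffices to list precisely the degenerate tori among the $\Go_{\sigma}$-classes of maximal $\sigma$-stable tori that correspond to genuine finite simple groups $G$.

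I would proceed type by type, in each case replacing the adjoint $\Go$ of the $\sigma$-setup with the matrix version used in the earlier theorems (so $SL_n(k)$ for $A_l$, $Sp_{2n}(k)$ for $C_l$, $SO_{2n+1}(k)$ for $B_l$ with $p$ odd, and $SO_{2n}(k)$ or $\Omega_{2n}(k)$ for $D_l$ depending on the characteristic). By Lemma \ref{univ}, applied to the isogeny $\xi$ from Remark \ref{rem1}, the torus $S = \So_{\sigma} \cap G$ is nondegenerate with respect to $\So$ if and only if the corresponding torus in the matrix group is nondegenerate with respect to its preimage, since roots take the same values on corresponding elements. Thus, for each type, the list of degenerate classes in Theorem \ref{genclassic} is obtained directly by transporting the lists of Theorems \ref{SL}, \ref{Sp}, \ref{O2n1n}, \ref{O2nn} and \ref{O2nne} across the isogeny.

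The remaining work is then bookkeeping, and it is where I expect the main difficulties to lie. First, one must verify that every degenerate case appearing in the intermediate theorems genuinely gives a finite simple group $G$ under the standing assumption that $G = O^{p'}(\Go_{\sigma})'$ is simple. Here the small-rank constraints $n \ge 2,3,4,7$ for $PSL_n, PSU_n, PSp_n, P\Omega_n^{\varepsilon}$ (respectively) eliminate all exceptional cases listed in Theorems \ref{O2n1n} and \ref{O2nn}, since those occur only for $n \in \{1,2\}$ in the orthogonal matrix group, producing $P\Omega_{3}(3)$, $P\Omega_5(3)$ or $P\Omega_4^+(3)$, none of which is a simple group in our list. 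Similarly, for type $A_l$ one checks that the degenerate classes only appear when $\varepsilon = +$ (as noted inside the proof of Theorem \ref{SL}), so no twisted $PSU_n(q)$ contributes. Second, one must handle $PSp_4(2)$ carefully: there $G = O^{p'}(\Go_\sigma)' \cong \Sym(6)'$ has index $2$ in $O^{p'}(\Go_\sigma)$, but a direct check shows that $S \cap G$ still fails to be nondegenerate precisely in the cases listed under (2), because dropping to an index-$2$ subgroup of a torus cannot cause a root to start being nontrivial on it.

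The only genuinely subtle translation is in the $D_l$ odd-characteristic case, where $\Ho = SO_{2n}(k)$ is neither adjoint nor universal, so one needs Lemma \ref{buturTOm} (already exploited in Theorem \ref{O2nn}) to describe $S^{g^{-1}}$ precisely inside $\To$, and the twisted case $\varepsilon = -$ must be handled via the conjugation trick $g_0^{\varphi_q} g_0^{-1} = n_0$ together with Lemma \ref{butur}, reducing the question to an untwisted torus indexed by $w_0 w$. Once these identifications are in place, the listed cases in part (3) of the theorem correspond exactly to the degenerate cases in Theorem \ref{O2nne}, and all other types reduce verbatim, completing the proof.
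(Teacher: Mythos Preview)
Your plan handles one direction correctly but misses the other, and this is a genuine gap. Lemma~\ref{nongeneq} is a one-way implication: nondegenerate implies $N(G,S)=N_G(S)$. Hence ``$N(G,S)\ne N_G(S)\Rightarrow S$ degenerate'' follows, and your bookkeeping via Lemma~\ref{univ} and the rank restrictions does correctly trim the degenerate lists of Theorems~\ref{SL}, \ref{Sp}, \ref{O2n1n}, \ref{O2nn}, \ref{O2nne} down to the cases in the statement. But you never address the converse: for each torus that \emph{is} degenerate, you must exhibit an element of $N_G(S)$ lying outside $N(G,S)$. Nothing in your outline produces such an element; you implicitly assume ``degenerate $\Leftrightarrow$ $N(G,S)\ne N_G(S)$'', which is exactly what remains to be proved.

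The paper supplies this missing direction with a concrete construction. Conjugating back to $\To$ and working inside $\Go_{\sigma w}$, one takes the specific root $\alpha$ identified in the proofs of Theorems~\ref{SL}, \ref{Sp}, \ref{O2nne} (the one vanishing on $\To_{\sigma w}$) and checks that the root subgroup $\overline{X}_\alpha$ is itself $\sigma w$-stable. Then $(\overline{X}_\alpha)_{\sigma w}$ is a nontrivial $p$-group, hence lies in $O^{p'}(\Go_{\sigma w})$, and it centralises $S^{g^{-1}}$ because $\alpha$ vanishes there. Since any root element sits in a Borel subgroup $\Bo\supseteq\To$ with $\Bo\cap N_{\Go}(\To)=\To$, this unipotent element normalises $S^{g^{-1}}$ but is not in $N_{\Go}(\To)$, forcing $N_G(S)>N(G,S)$. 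Your plan should incorporate this step. Relatedly, your treatment of $PSp_4(2)'$ only argues that the relevant $S$ remain degenerate in the index-$2$ subgroup; that is true but again does not by itself give the normaliser inequality. The paper resolves this small case by a direct \textsc{Magma} computation comparing $|N_G(S)|$ with $|N(G,S)|=|S|\cdot|C_W(w)|$.
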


\begin{proof}[{Proof of Theorem \ref{genclassic}}]
If $G=PSp_4(2)'$, then the theorem is verified computationally using  {\sc Magma} \cite{magma}. The corresponding maximal tori may be found by their orders via the function {\tt SubgroupClasses}. One can compute the normalisers of the tori directly and compare their orders with $|N(G,S)|$ using the fact that $|N(G,S)/S|=|C_W(w)|$ by \cite[Proposition 3.3.6]{carter0}. Therefore,  we now assume that $G=O^{p'}(\Go_{\sigma}).$

If $N(G,S) \ne N_G(S),$ then $S$ is a degenerate subgroup of $\So$ by Lemma \ref{nongeneq}. In view of Lemma \ref{univ}, we need to consider exactly the tori that appear in Theorems \ref{SL}, \ref{Sp}, \ref{O2n1n}, \ref{O2nn} and \ref{O2nne} (modulo the isogeny $\xi $). Note that degenerate tori in Theorems \ref{O2n1n} and \ref{O2nn} occur only for $\Go=SO_n^{\varepsilon}(k)$ with $n < 7$, so we do not consider them. What is left is exactly the cases in the statement of the theorem. Our goal is to show that in all these cases $N(G,S) \ne N_G(S).$

Since it is more convenient to work with the torus $\To$ consisting of diagonal matrices, we define the map $w:=\Go \to \Go$ to be the conjugation map corresponding to the element $g^{\sigma}g^{-1}$ and work with $\Go_{\sigma w}$ instead of $\Go_{\sigma}$. Indeed, it is easy to see that $\Go_{\sigma}=(\Go_{\sigma w})^g$. Let $\widetilde{N}$ be $N_{O^{p'}(\Go_{\sigma w})}(\To_{\sigma w} \cap O^{p'}(\Go_{\sigma w}))$. Since $\So_{\sigma} = (\To_{\sigma w})^g$,  we obtain
$$N_G(S)= \widetilde{N}^g.$$
Obviously, $N_{\Go}(\So)=(N_{\Go}(\To))^g,$ so to obtain $N(G,S)<N_G(S)$ it suffices to show that $\widetilde{N} \nleq N_{\Go}(\To)$. If a $\To$-root subgroup $\overline{X}_{\alpha}$ is $\sigma w$-stable, then $(\overline{X}_{\alpha})_{\sigma w}$ is nontrivial  by \cite[Theorem 2.1.12]{class} and, since $(\overline{X}_{\alpha})_{\sigma w}$ is a $p$-group, it lies in $O^{p'}(\Go_{\sigma w}).$ Recall that every root element lies in a Borel subgroup $\overline{B}$ of $\Go$ containing $\To$, and $N_{\Go}(\To) \cap \overline{B}=\To$, so $(\overline{X}_{\alpha})_{\sigma w} \nleq N_{\Go}(\To).$

Let $S$ be a degenerate torus of $G$. Then there exists a $\To$-root $\alpha$ such that $\alpha(h)=1$ for all $h \in S^{g^{-1}}=\To_{\sigma w} \cap O^{p'}(\Go_{\sigma w})$. Taking $\alpha$ as in the proofs of Theorems \ref{SL}, \ref{Sp} and \ref{O2nne}, it is routine to show that $\overline{X}_{\alpha}$ is $\sigma w$-stable, so, by the argument in the paragraph above, $|(\overline{X}_{\alpha})_{\sigma w}|>1.$  Moreover, $\overline{X}_{\alpha}$ centralises $S^{g^{-1}}$, so $(\overline{X}_{\alpha})_{\sigma w} \le \widetilde{N}.$ Hence, $\widetilde{N} \nleq N_{\Go}(\To)$ and $N_G(S) > N(G,S).$
\end{proof}

\section{Exceptional groups}\label{exceptcase}

There are only finitely many exceptional groups that are not covered by Theorem \ref{myq}, namely:
\begin{enumerate}[label=(\alph*)]
    \item $(\Go, \sigma)=(G_2(k), \varphi_q)$, so $G=G_2(q)$ for $q \in \{2,3\}$; \label{explist1}
    \item $(\Go, \sigma)=(F_4(k), \varphi_q)$, so $G=F_4(q)$ for $q=2$;
    \item $(\Go, \sigma)=(E_6(k), \varphi_q)$, so $G=E_6(q)$ for $q \in \{2,3,4\}$;
    \item $(\Go, \sigma)=(E_7(k), \varphi_q)$, so $G=E_7(q)$ for $q \in \{2,3\}$;
    \item $(\Go, \sigma)=(E_8(k), \varphi_q)$, so $G=E_8(q)$ for $q=2$; \label{explist5}
    \item $(\Go, \sigma)=(B_2(k), \varphi_{2^a}\psi)$, so $G=\prescript{2}{}{B}_2(2^{a+1/2})$ for $a=1$; \label{explist6}
    \item $(\Go, \sigma)=(G_2(k), \varphi_{3^a}\psi)$, so $G=\prescript{2}{}{G}_2(3^{a+1/2})$ for $a=0$; \label{explist2g2}
    \item $(\Go, \sigma)=(F_4(k), \varphi_{2^a}\psi)$, so $G=\prescript{2}{}{F}_4(2^{a+1/2})$ for $a=0$; \label{explist2f4}
    \item $(\Go, \sigma)=(E_6(k), \varphi_q \gamma_{\rho})$, so $G=\prescript{2}{}{E}_6(q)$ for $q \in \{2,3,4\}$; \label{explist7}
    \item $(\Go, \sigma)=(D_4(k), \varphi_q \gamma_{\rho})$ and $|\rho|=3$, so $G=\prescript{3}{}{D}_4(q)$ for $q \in \{2,3,4,5\}$. \label{explist8}
\end{enumerate}
We deal with them computationally using {\sc Magma} \cite{magma}. Let us first discuss the structure of $H^1(\sigma,W)$ for the cases above. For \ref{explist1} -- \ref{explist5}, $H^1(\sigma,W)=\mathfrak{C}(W)$, so we follow \cite{cartweyl} and use admissible diagrams to denote elements of  $H^1(\sigma,W).$ We say 
``$w$ is of type $X$'', where $X$ is a label of an admissible diagram, meaning that $w$ lies in the corresponding conjugacy class of $W.$ 

For \ref{explist6} -- \ref{explist8}, we first need to introduce some additional notation. Recall that $\rho: \Sigma \to \Sigma$ is as in Theorem \ref{thaut}. We can consider $W$ as a subgroup of $\Sym(\Sigma)$ and let $W^*$ be $\langle W, \rho \rangle.$ By \cite[Lemma 6.4]{gager}, there is a bijection $\theta :H^1(\sigma, W) \to \mathfrak{C}_{\rho}(W^*),$ where $\mathfrak{C}_{\rho}(W^*)$ is the set of conjugacy classes of $W^*$ that are contained in the coset $\rho W$. Moreover, if $w$ is a representative of $h \in H^1(\sigma,W),$ then $\theta (h) = \{\rho w\}^{W^*}.$ 

In case \ref{explist6}, $W= \langle w_a, w_b \mid w_a^2=w_b^2=(w_aw_b)^4=1 \rangle$ is a  dihedral group of order $8$, and the set of representatives of the elements of $\mathfrak{C}_{\rho}(W^*)$ is $\{ \rho, \rho w_a, \rho (w_aw_b)w_a\}$, where $a$ and $b$ are the fundamental roots of $\Sigma$, by \cite[\S 7.2]{gager}. Note that $\prescript{2}{}{B}_2(2^{1/2})$ is solvable, so we do not consider the case $a=0$ for this family.

In case \ref{explist2g2}, $W= \langle w_a, w_b \mid w_a^2=w_b^2=(w_aw_b)^6=1 \rangle$ is a  dihedral group of order $12$, and the set of representatives of the elements of $\mathfrak{C}_{\rho}(W^*)$ is $\{ \rho, \rho w_a, \rho w_aw_bw_a, \rho (w_aw_b)^2w_a\}$, where $a$ and $b$ are the fundamental roots of $\Sigma$, by \cite[\S 7.3]{gager}.

In case \ref{explist2f4}, let $p_i$ for $1 \le i \le 4$ be the fundamental roots of $\Sigma$ in the order they are listed in Remark \ref{irrRS} and let the $w_i \in W$ be the corresponding reflections. Also, let $a=w_1w_4,$ $b=(w_2w_3)^2$, $c=w_1w_2w_3w_4$ and $z=ab.$  Then $|\mathfrak{C}_{\rho}(W^*)|=11$ and  the set of representatives of the elements of $\mathfrak{C}_{\rho}(W^*)$ is $\{ \rho, \rho w_2, \rho zw_2, \rho w_1, \rho w_1w_2, \rho zw_1w_2, \rho ab, \rho zab, \rho (ab)^2, \rho c(w_2w_3), \rho cb \}$, by \cite[\S 7.4]{gager}.

So, in cases  \ref{explist6} -- \ref{explist2f4}, we say ``$w$ is of type $\rho x$'' where $x \in W$, meaning   that $w$ is a representative of $h \in H^1(\sigma, W)$ such that $\theta(h)$ is  the  conjugacy class of ${W^*}$ with a representative  $\rho x$.

In case \ref{explist7}, the map $(-1): \Sigma \to \Sigma$, defined by $(-1): \alpha \mapsto -\alpha$ for $\alpha \in \Sigma$, lies in $W^*$ and $w_0 :=-\rho$ lies in $W$. By \cite[\S 6.4]{gager}, 
$$\mathfrak{C}_{\rho}(W^*)=\{-C \mid C \in \mathfrak{C}(W)\},$$
so there exists a bijection between $H^{1}(\sigma, W)$ and $\mathfrak{C}(W)$. Precisely, if $w_0w$ is a representative of $C \in \mathfrak{C}(W)$, then $w$ is a representative of the corresponding element $h=\theta^{-1}(-C) \in H^1(\sigma, W).$  So, we say 
``$w$ is of type $X$'', where $X$ is a label of an admissible diagram, meaning that $w_0w$ lies in the corresponding conjugacy class of $W.$ 

In case \ref{explist8}, we consider the root system $\Sigma$ of type $D_4$ as a subset (consisting of all short roots) of the root system $\widehat{\Sigma}$ of type $F_4.$ Then $\rho$ lies in the Weyl group $\widehat{W}$ of $\widehat{\Sigma}$, and $W^*$ is a normal subgroup of $\widehat{W}$ of index $2$. There are seven classes in $\mathfrak{C}_{\rho}(W^*)$, and  if $C \in \mathfrak{C}_{\rho}(W^*)$, then there exists $\widehat{C} \in \mathfrak{C}(\widehat{W})$ with $C \subset \widehat{C}$ and $|C|=|\widehat{C}|/2.$ The corresponding seven classes in $\mathfrak{C}(\widehat{W})$ are  $\{\widetilde{A}_2, C_3, \widetilde{A}_2 \times A_1, A_2 \times \widetilde{A}_2, C_3 \times A_1, F_4, F_4(a_1)\}.$ See \cite[\S 7.5]{gager} for details.  We say ``$w$ is of type $X^*$'', where $X$ is a label of an admissible diagram, meaning that $w$ is a representative of $h \in H^1(\sigma, W)$ such that $\theta(h)$ is a subset of the corresponding conjugacy class of $\widehat{W}.$ 

\begin{Th}\label{thexcept}
Let $(\Go, \sigma)$ be a $\sigma$-setup for an exceptional finite simple group $G$ of Lie type.   Let $\Ss=(\To^g)_{\sigma}$ with $\pi(g^{\sigma}g^{-1}) = w$ and let $S$ be $\So_{\sigma} \cap G.$ The algebraic normaliser $N(G,S)$ is {\bf \emph{not}} equal to $N_G(S)$ if and only if one of the following holds:
\begin{enumerate}
    \item $(\Go, \sigma,G)=(G_2(k), \varphi_q, G_2(q))$, $q=2$ and $w$ is of one of the following $3$ types: $\emptyset$, $A_1$, $\widetilde{A}_1.$
    \item $(\Go, \sigma,G)=(F_4(k), \varphi_q, F_4(q))$, $q=2$ and $w$ is of one of the following $14$ types: $\emptyset$, $A_1$, $\widetilde{A}_1$, ${A}_1^2$, $A_1 \times \widetilde{A}_1$, $A_2$, $\widetilde{A}_2$, $B_2$, $A_1^3$, $A_1^2 \times \widetilde{A}_1$, $A_3$, $B_2 \times A_1$, $C_3$, $B_3$.
    \item $(\Go, \sigma,G)=(E_6(k), \varphi_q,E_6(q))$, $q=2$ and $w$ is of one of the following $11$ types: $\emptyset$, $A_1$, $A_1^2$, $A_2$, $A_1^3$, $A_2 \times A_1$, $A_3$, $A_2^2$, $A_3 \times A_1$, $A_4$, $A_5.$
    \item $(\Go, \sigma,G)=(E_7(k), \varphi_q, E_7(q))$, $q=2$ and $w$ is of one of the following $33$ types: $\emptyset$, $A_1$, $A_1^2$, $A_2$, $(A_1^3)'$, $(A_1^3)''$, $A_2 \times A_1$, $A_3$, $(A_1^4)'$, $(A_1^4)''$, $A_2 \times A_1^2$, $A_2^2$, $(A_3 \times A_1)'$, $(A_3 \times A_1)''$, $A_4$, $D_4$, $D_4(a_1),$ $A_1^5$, $(A_3 \times A_1^2)'$, $(A_3 \times A_1^2)''$, $A_3 \times A_2$, $(A_5)'$, $(A_5)''$, $D_4 \times A_1$, $D_4(a_1) \times A_1$, $D_5$, $D_5(a_1)$, $A_1^6$, $A_3^2$, $D_4 \times A_1^2$, $D_6$, $D_6(a_1)$, $D_6(a_2).$
    \item $(\Go, \sigma,G)=(E_8(k), \varphi_q, E_8(q))$, $q=2$ and $w$ is of one of the following $56$ types: $\emptyset$, $A_1$, $A_1^2$, $A_2$, $A_1^3$, $A_2 \times A_1$, $A_3$, $(A_1^4)'$, $(A_1^4)''$, $A_2 \times A_1^2$, $A_2^2$, $A_3 \times A_1$, $A_4$, $D_4$, $D_4(a_1),$ $A_1^5$, $A_2 \times A_1^3$, $A_2^2 \times A_1$, $(A_3 \times A_1^2)'$, $(A_3 \times A_1^2)''$, $A_3 \times A_2$, $A_4\times A_1$, $A_5$, $D_4 \times A_1$, $D_4(a_1) \times A_1$, $D_5$, $D_5(a_1)$, $A_1^6$, $A_2^2 \times A_1^2$, $A_2^3$, $A_3 \times A_1^3$, $A_3 \times A_2 \times A_1$, $(A_3^2)'$, $A_4 \times A_2$, $(A_5 \times A_1)'$, $(A_5 \times A_1)''$, $A_6$, $D_4 \times A_1^2$, $D_5 \times A_1$, $D_5(a_1) \times A_1$, $D_6$, $D_6(a_1)$, $D_6(a_2)$, $E_6$, $E_6(a_1)$, $E_6(a_2)$, $A_1^7$, $A_3^2 \times A_1$, $D_4 \times A_1^3$, $D_6 \times A_1$, $D_6(a_2) \times A_1$, $E_7$, $E_7(a_1)$, $E_7(a_2)$, $E_7(a_3)$, $D_4^2.$
    \item $(\Go, \sigma,G)=(G_2(k),  \varphi_{3^a}\psi, \prescript{2}{}{G}_2(3^{a+1/2})'),$ $a =0$ and $w$ is of one of the following $2$ types: $\rho,$ $\rho w_1.$
    \item $(\Go, \sigma,G)=(F_4(k),  \varphi_{2^a}\psi, \prescript{2}{}{F}_4(2^{a+1/2})'),$ $a =0$ and $w$ is of one of the following $8$ types:   $\rho$, $\rho w_2$, $\rho zw_2$, $\rho w_1$, $\rho w_1w_2$, $\rho ab$,  $\rho c(w_2w_3)$, $\rho cb$.
    \item $(\Go, \sigma,G)=(E_6(k), \varphi_q \gamma_{\rho}, \prescript{2}{}{E}_6(q))$, $q=2$ and $w$ is of one of the following $11$ types: $A_1$, $A_1^2$, $A_1^3$, $A_2 \times A_1$, $A_1^4$, $A_2\times A_1^2$, $A_3 \times A_1$, $A_4$, $A_2^2 \times A_1$, $A_3 \times A_1^2$, $A_5\times A_1$.
    \item $(\Go, \sigma,G)=(D_4(k), \varphi_q\gamma_{\rho}, \prescript{3}{}{D}_4(q) )$, $q=2$, $|\gamma_{\rho}|=3$ and $w$ is of one of the following $2$ types: $(\widetilde{A}_2)^*$, $(C_3)^*$.

\end{enumerate}
\end{Th}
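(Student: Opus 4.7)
By Theorem~\ref{myq} and Lemma~\ref{univ} (applied case by case), the only pairs $(\Go,\sigma)$ for which degenerate tori can possibly arise are the finitely many cases \ref{explist1}--\ref{explist8} listed at the start of Section~\ref{exceptcase}. The plan is to handle each of these cases computationally in \textsc{Magma}, following the same logical structure as the proof of Theorem~\ref{genclassic}: first identify all degenerate tori, then exhibit explicit root elements to show that $N(G,S)\ne N_G(S)$ in every such case.

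Concretely, for each fixed $(\Go,\sigma)$ in the list, I would first construct the set $H^1(\sigma,W)$ together with chosen representatives $w$, using the description recalled at the start of Section~\ref{exceptcase}: ordinary conjugacy classes of $W$ in the untwisted cases, the classes $\mathfrak{C}_{\rho}(W^*)$ of $\rho W$ in $W^*=\langle W,\rho\rangle$ in the twisted cases, and for $\prescript{2}{}{E}_6$ the bijection with $\mathfrak{C}(W)$ given by $w\mapsto w_0w$. For each representative $w$, replacing $\Go_\sigma$ by the conjugate $\Go_{\sigma w}$ as in the proof of Theorem~\ref{genclassic} lets me work with the fixed diagonal torus $\To$. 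The torus $\To_{\sigma w}$ is computed directly from the matrix action of $\sigma w$ on the cocharacter lattice; intersecting with $O^{p'}(\Go_{\sigma w})$ (or, in the adjoint case, using Lemma~\ref{univ} to compute in a universal or intermediate cover, where $T=\To_{\sigma w}$) gives the subgroup $S^{g^{-1}}\le \To$. Degeneracy is then checked by testing, for each of the finitely many $\To$-roots $\alpha\in\Sigma$, whether $\alpha(h)=1$ for all $h\in S^{g^{-1}}$; equivalently, whether $\alpha$ lies in the annihilator of the finite subgroup of $X(\To)\otimes \mathbb{Q}/\mathbb{Z}$ dual to $S^{g^{-1}}$. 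This produces the explicit list of types given in cases (1)--(7) of the theorem.

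Having produced the degenerate tori, I still need to verify the non-equality $N(G,S)\ne N_G(S)$; the argument copies the one from the proof of Theorem~\ref{genclassic}. For each degenerate pair $(w,\alpha)$ found above, I check that the root subgroup $\overline{X}_\alpha$ is stabilised by $\sigma w$ (this is automatic once $\alpha$ vanishes on $\To_{\sigma w}\cap O^{p'}(\Go_{\sigma w})$ and $\sigma w$ permutes roots compatibly with this fixed locus). Then $(\overline{X}_\alpha)_{\sigma w}$ is a nontrivial $p$-group, hence contained in $O^{p'}(\Go_{\sigma w})$ by \cite[Thm.~2.1.12]{class}, it centralises $S^{g^{-1}}$, and it lies in the Borel $\overline{B}\supset\To$ and thus intersects $N_{\Go}(\To)$ trivially. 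Conjugating back by $g$ exhibits an element of $N_G(S)\setminus N(G,S)$. Conversely, if $S$ is nondegenerate then $N(G,S)=N_G(S)$ by Lemma~\ref{nongeneq}, so no case outside the list can occur.

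The main obstacle I expect is not conceptual but computational: cases \ref{explist5} ($E_8(2)$) and \ref{explist7} ($\prescript{2}{}{E}_6(q)$ for $q=3,4$) involve Weyl groups of large order and several hundred $\sigma$-conjugacy classes, so the degeneracy scan and identification of admissible-diagram labels must be implemented carefully; labelling in the twisted cases further requires translating between the internal $\sigma$-conjugacy data of $W$ and the diagrams attached via the bijection with $\mathfrak{C}_\rho(W^*)$. A secondary check for $G=PSp_4(2)'$ (which is excluded from the exceptional list but is the only place $G\ne O^{p'}(\Go_\sigma)$) can be performed by brute force with \texttt{SubgroupClasses} and the identity $|N(G,S)/S|=|C_W(w)|$ from \cite[Prop.~3.3.6]{carter0}, exactly as in Theorem~\ref{genclassic}.
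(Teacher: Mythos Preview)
Your overall strategy matches the paper's: reduce to the finite list via Theorem~\ref{myq}, compute $\To_{\sigma w}$ for a representative $w$ in each $\sigma$-class, test each root for vanishing, and then for each degenerate case verify that some vanishing root $\alpha$ has $\overline{X}_\alpha$ $\sigma w$-stable so that $(\overline{X}_\alpha)_{\sigma w}$ witnesses $N_G(S)>N(G,S)$.

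Two points of difference are worth noting. First, the paper does \emph{not} treat the twisted cases uniformly via the cocharacter lattice as you propose. For $\prescript{2}{}{B}_2(8)$ and $\prescript{3}{}{D}_4(2)$ it abandons the root-by-root test entirely and instead enumerates subgroups with \texttt{SubgroupClasses}, identifying tori by order and comparing $|N_G(T)|$ against $|T|\cdot|C_W(w)|$. For $\prescript{3}{}{D}_4(q)$ with $q>2$ it avoids computation altogether: since the adjoint and universal groups coincide here, one may rerun the argument of Theorem~\ref{myq} with $X=\mathbb{Z}\Sigma$ (so $\Delta=1$), which already gives nondegeneracy for $q>2$. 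For $\prescript{2}{}{E}_6(q)$ the paper embeds $\To_{\sigma w}\le \To_{\varphi_{q^2}(w_0w)^2}$ and uses \texttt{TwistedTori} on the untwisted group $E_6(q^2)$, filtering for $\sigma$-invariants. Your uniform lattice approach is conceptually cleaner and would also work, but these shortcuts are what make the paper's computation tractable in practice.

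Second, your parenthetical that $\sigma w$-stability of $\overline{X}_\alpha$ is ``automatic'' is not right: if $\alpha$ vanishes on $\To_{\sigma w}$ then so does $\alpha^{\sigma w}$, but this only forces $\alpha^{\sigma w}$ into the (possibly larger) set of vanishing roots, not back to $\alpha$ itself. The paper explicitly says ``it turns out that $\overline{X}_\alpha$ is $\sigma w$-stable'', i.e.\ this is an empirical observation verified case by case, not a general fact. Finally, your closing remark about $PSp_4(2)'$ is out of place: that group is classical and is handled in Theorem~\ref{genclassic}, not here.
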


\begin{proof}[{Proof of Theorem \ref{thexcept}}]
For \ref{explist1} -- \ref{explist5}, we use the {\sc Magma} functionality for computing with groups of Lie type based on \cite{Taylor} and \cite{Haller}. This allows us to construct and operate with groups of Lie type and their elements in terms of Chevalley generators. We construct $\Go_{\sigma}$ as a {\tt GroupOfLieType} and use   {\tt TwistedTori} to generate a list of maximal tori $\To_{\sigma w}$ -- one for each class in $H^1(\sigma, W)= \mathfrak{C}(W),$ where $\To=\langle h_{\alpha}(t)| \alpha \in \Sigma, t \in k^* \rangle$ is the standard $\sigma$-split maximal torus of $\Go$ as in Notation \ref{not}. Precisely, each record contains the sequence of orders of cyclic parts of the torus,  the sequence of generators of the respective orders and $w$. Further, for each $\alpha \in \Sigma^+$, we check if $\alpha$ vanishes on $\To_{\sigma w}$ via \eqref{ract}. For each vanishing root $\alpha$, it turns out that $\overline{X}_{\alpha}$ is $\sigma w$-stable. Thus, arguments as in the proof of Theorem \ref{genclassic} show that $N_G(T)>N(G,T).$  {\tt TwistedTori} is currently implemented only for untwisted groups, so we  treat \ref{explist6} -- \ref{explist8} separately.  

For \ref{explist6} -- \ref{explist2f4}, we construct $G$ using {\tt SuzukiGroup}, \ {\tt PSL(2,8)} and  {\tt TitsGroup} respectively.  The group $G$ is small enough to obtain all its subgroups via {\tt SubgroupClasses}. Now, \cite[Propositions 7.3, 7.4 and Table 7.5]{gager} provide orders of the maximal torus $T$ and $N(G,T)/T$ for each of the classes in $H^1(\sigma, W).$ Using this data, it is straightforward to find corresponding tori in the list generated by {\tt SubgroupClasses} and verify if $N(G,T)=N_G(T).$ Note that $N(G,T)=N_G(T)$ for all classes of maximal tori in  case \ref{explist6}, so there is no corresponding line in the statement of the theorem. 

The same approach works for \ref{explist8} with $q=2$. There is no need to do the computations for $q>2.$ Indeed, since $(\Go_u)_{\sigma}$ and $(\Go_a)_{\sigma}$ are isomorphic, the isogeny $\xi$ is an isomorphism (see Remark \ref{rem1}). So, by Lemma \ref{univ}, if $\Go$ is adjoint, then $S= \So_{\sigma}$, for every $\sigma$-stable torus maximal torus $\So$, and we  apply the proof of Theorem \ref{myq} to $\Go$ with $X=\mathbb{Z}\Sigma$, which implies that all tori of $\Go_{\sigma}$ are nondegenerate provided $q>2.$

For \ref{explist7}, we observe that $$\To_{\sigma w}= \To_{\varphi_q \gamma_{\rho} w}=\To_{-\varphi_q w_0w}.$$ Hence, $$\To_{\sigma w} \le \To_{(\sigma w)^2}=\To_{\varphi_{q^2} (w_0w)^2},$$ and we obtain the elements of $\To_{\sigma w}$ by examining directly whether the elements of  $\To_{\varphi_{q^2} (w_0w)^2}$ (constructed using {\tt TwistedTori} on $E_6(q^2)$) are $\sigma w$-invariant. The remaining computations are the same as for \ref{explist1} -- \ref{explist5}. A similar approach is used to verify that tori with $N(G,T)=N_G(T)$ are indeed nondegenerate in  cases \ref{explist6} -- \ref{explist2f4} and \ref{explist8}, verifying the first statement in Corollary \ref{cor}.
\end{proof}

\begin{Rem}
 Most computations take a few minutes to complete; the exception is for computing the elements of $\To_{\sigma w}$ for \ref{explist7} and $q \in \{3,4\}$ which takes some days using  {\sc Magma} V2.27-5 on a 2.6 GHz machine. The referee of this paper suggested another, much faster way to do the computations for Theorem \ref{thexcept}. By \cite[Proposition 3.2.3]{carter0}, the torus  $\To_{\sigma w}$ is degenerate if and only if there is a root $\alpha$ in $(\sigma w-1)X$.  We briefly describe how to check this using {\sc Magma}. We write elements in $X$, in particular roots, in the basis of fundamental weights ( so the coordinates of the simple roots are the rows of the Cartan matrix). If $G$ is constructed via {\tt GroupOfLieType}, then one can write the roots in this way using ${\tt Roots(G: Basis:="Weight")}.$ Further, we construct the matrix $\Gamma$ of the graph automorphism acting on $X$. This can be done by conjugating the permutation matrix that permutes the simple roots by the Cartan matrix.  Further, in each class of $H^1(\sigma, W)$ choose a representative $w$. Its action on $X$ is described by an integer matrix. The group $W$ is given as a linear group in this basis by {\tt WeylGroup(GrpMat, G)}.  Consider the matrix $M(w) = q \Gamma w  - 1$. The torus $\To_{\sigma w}$ is degenerate if and only if some root $\alpha$ is an integer linear combination of the rows of $M(w)$ or, equivalently, if the entries of $\alpha \cdot M(w)^{-1}$ are  integers. We double-checked our results for $(i)$ using this method.
\end{Rem}

\section*{Acknowledgements}

I would like to thank the referee for careful reading, useful corrections and the suggestion of the alternative computation method.

\bibliographystyle{abbrv}
\bibliography{ntori_rev.bib}


\end{document}